\numberwithin{figure}{section}
\numberwithin{table}{section}
\newcommand{\cat}{\ensuremath{\mathrm{cat}}}
\newcommand{\scat}{\ensuremath{\mathrm{scat}}}
\newcommand{\id}{\ensuremath{\mathrm{id}}}
\newcommand{\TC}{\ensuremath{\mathrm{TC}}}
\newcommand{\D}{\ensuremath{\mathrm{D}}}
\newcommand{\SD}{\ensuremath{\mathrm{SD}}}
\newcommand{\sd}{\ensuremath{\mathrm{sd}}}
\newtheorem{theorem}{Theorem}[section]
\newtheorem{definition}{Definition}[section]
\newtheorem{corollary}{Corollary}[section]
\newtheorem{remark}{Remark}[section]
\newtheorem{example}{Example}[section]
\newtheorem{proposition}{Proposition}[section]
\newtheorem{lemma}{Lemma}[section]
\begin{document}

\title{Higher Contiguity Distance}

\author{N\.{I}lay Ek\.{I}z Yazici\textsuperscript{1}, Ayşe Borat\textsuperscript{1,2}}

\date{\today}

\address{\textsc{Nilay Ekiz Yazıcı}
Bursa Technical University\\
Faculty of Engineering and Natural Sciences\\
Department of Mathematics\\
Bursa, Turkiye}
\email{nilay.ekiz@btu.edu.tr} 

\address{\textsc{Ayşe Borat}
Bursa Technical University\\
Faculty of Engineering and Natural Sciences\\
Department of Mathematics\\
Bursa, Turkiye}
\email{ayse.borat@btu.edu.tr}

\subjclass[2010]{55M30, 55U10, 55U05}

\keywords{contiguity distance, homotopic distance, discrete topological complexity, simplicial Lusternik Schnirelmann category, simplicial complex}

\begin{abstract} In this paper, we introduce the higher analogues of contiguity distance and its relations with simplicial Lusternik-Schnirelmann category and discrete topological complexity. Also we study the effects of geometric realisation and barycentric subdivision in the sense that how the geometric realisation of the simplicial maps and the induced simplicial maps on barycentric subdivisions affects higher contiguity distance. 
\end{abstract}

\maketitle

\footnotetext[1] {The authors are supported by the Scientific and Technological Research Council of Turkey (T\"{U}B\.{I}TAK) [grant number 123F268].}
\footnotetext[2]{The corresponding author.}

\section{Introduction}

Homotopic distance (\cite{MVML}) is a mathematical tool which can be realised as a generalisation of the homotopy invariants topological complexity (\cite{F2}) and Lusternik-Schnirelmann category (\cite{CLOT}). All these invariants are important since that they can be used to analysize motion planning problem. Moreover, $n$-th topological complexity is related to the motion planning problem which requires a rule to assign any $n$ points in the space to an $n$-legged path. The concept of higher topological complexity is introduced by Rudyak in \cite{R} and is developed by Basabe, Gonzalez, Rudyak and Tamaki in \cite{BGRT}. Motivated from that fact, in \cite{MVML} Macias-Virgos, Mosquera-Lois and in \cite{BV} Borat, Vergili construct $n$-th homotopic distance in a way that for some special continuous maps it coincides with $n$-th topological complexity and Lusternik-Schnirelmann category.

One of the importance of considering motion planning problem in the simplicial realm is that it gives a more computable approach on motion planning problem. Regarding the problem in a combinatorial way leads one to ask how the ideas in the preceding paragraph are modelled in a combinatorial way. The answer comes from Fernandez-Ternero, Macias-Virgos, Minuz and Vilches for discrete topological complexity in \cite{FMMV}, from the same authors for simplicial Lusternik-Schnirelmann category in \cite{FMMV2} and \cite{FMV}, from Borat, Pamuk and Vergili for simplicial analogue of homotopic distance so-called contiguity distance in \cite{BPV}, from Alabay, Borat, Cihangirli and Dirican Erdal for $n$-th discrete topological complexity in \cite{ABCD}.

Moreover, Gonzalez studied simplicial analogue of topological complexity from a different viewpoint in \cite{G}. 

In this paper, we turn our face towards contigutiy distance and focus on to build its higher analogues.


\section{Preliminaries}

In this section, we will recall some combinatorial definitions which will be used to construct the main tools of this paper.

\begin{definition}
Two simplicial maps $\varphi, \psi : K \to L$ are called contiguous if for  every simplex $\{v_0, \dots, v_k\}$ in $K$, $\{\varphi(v_0), \dots , \varphi(v_k),\psi(v_0), \dots , \psi(v_k)\}$ constitutes a simplex in $L$. Such maps are denoted by $\varphi \sim_c \psi$.
\end{definition}

\begin{definition}
Two simplicial maps $\varphi, \psi : K \rightarrow L $ are said to be in the same contiguity class if one can find a finite sequence of simplicial maps $\varphi_i : K \rightarrow L$ for $i=0,1,\dots m$ such that $\varphi = \varphi_1 \sim_c \varphi_2 \sim_c \dots \sim_c \varphi_m= \psi$. Such maps are denoted by $\varphi \sim \psi $.
\end{definition}

\begin{definition}
	Let $K$ be a simplicial complex. A finite or infinite sequence of vertices such that any two consecutive vertices span an edge is called an edge path in $K$. If any two vertices in $K$ can be joined by a finite edge path, $K$ is said to be edge-path connected. 
\end{definition}

\begin{lemma}\cite{FMMV}\label{soru} $K$ is edge-path connected simplicial complex iff any two constant simplicial maps $L\rightarrow K$ are in the same contiguity class.
\end{lemma}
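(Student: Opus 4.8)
The plan is to prove both directions of the equivalence directly from the definitions, since the statement is essentially a translation of edge-path connectedness into the language of contiguity.

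First I would prove the forward direction. Assume $K$ is edge-path connected, and let $c_a, c_b : L \to K$ be the constant maps at vertices $a$ and $b$ of $K$. By hypothesis there is a finite edge path $a = w_0, w_1, \dots, w_n = b$ in $K$, so that each $\{w_{i-1}, w_i\}$ is a simplex of $K$. I would then consider the constant maps $c_{w_i} : L \to K$ and check that $c_{w_{i-1}} \sim_c c_{w_i}$ for each $i$: given any simplex $\{v_0, \dots, v_k\}$ of $L$, the image set under the pair of maps is $\{w_{i-1}, w_i\}$, which is a simplex of $K$ by construction. Chaining these contiguities gives $c_a \sim c_b$. One should note that since $L$ need not be connected or even nonempty, a small remark covers the degenerate cases, but the argument above does not actually use anything about $L$.

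For the converse, I would argue contrapositively or directly: suppose any two constant maps $L \to K$ lie in the same contiguity class, and take two arbitrary vertices $a, b \in K$; I want an edge path between them. Pick $L$ to be a single vertex (a point), so that constant maps $L \to K$ correspond bijectively to vertices of $K$. By hypothesis $c_a \sim c_b$, so there is a finite sequence $c_a = \varphi_0 \sim_c \varphi_1 \sim_c \cdots \sim_c \varphi_m = c_b$ of simplicial maps $L \to K$. Each $\varphi_j$ is itself constant, say at a vertex $u_j$, with $u_0 = a$ and $u_m = b$. The contiguity $\varphi_{j-1} \sim_c \varphi_j$ applied to the single vertex of $L$ says exactly that $\{u_{j-1}, u_j\}$ is a simplex of $K$, i.e. either $u_{j-1} = u_j$ or they span an edge. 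Deleting repetitions yields an edge path from $a$ to $b$, so $K$ is edge-path connected.

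The only mild subtlety, rather than a genuine obstacle, is handling the case where $L$ is empty or where one wants the statement for \emph{every} $L$ rather than a conveniently chosen one: for the forward direction the construction is uniform in $L$, and for the converse it suffices to test with $L$ a point, so no difficulty arises. I would present the proof in the two-paragraph forward/backward format above, keeping the verification that image sets are simplices explicit since that is the one place the definition of contiguity is used.
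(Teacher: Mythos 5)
Your proof is correct. The paper states this lemma as a cited result from \cite{FMMV} and gives no proof of its own, so there is no in-paper argument to compare against; your two-direction argument is the standard one, and you correctly isolate the only delicate point (that testing the converse with $L$ a single vertex suffices, and that intermediate maps in the contiguity chain are then automatically constant, which justifies reading off an edge path after deleting consecutive repetitions).
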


Since the cartesian product of simplicial complexes is not necessarily a simplicial complex, a "new" product on simplicial complexes is defined so that two simplicial complexes under the new product constitutes a simplicial complex.

Suppose that $K_1$ and $K_2$ are simplicial complexes. The categorical product of $K_1$ and $K_2$ (denoted by $K_1\prod K_2$) is defined as follows.

\begin{itemize}
\item[(i)] The set of vertices of $K_1\prod K_2$ is defined to be $V(K_1)\times V(K_2)$ where $V(K_i)$ stands for the set of vertices of $K_i$ for $i=1,2$.
\item[(ii)] Let $p_i: V(K_1\prod K_2)\rightarrow V(K_i)$ be the projection maps for $i=1,2$. A simplex $\sigma$ is said to be in $K_1\prod K_2$, if $p_1(\sigma)\in K_1$ and $p_2(\sigma)\in K_2$. 
\end{itemize} (for more details, see \cite{K}).

The following concept can be realised as the simplicial analogue to the homotopy type of topological spaces.

\begin{definition} The simplicial complexes $K$ and $K'$ are said to have the same strong homotopy type if there exist simplicial maps $\varphi: K\rightarrow K'$ and $\psi: K'\rightarrow K$ such that $\varphi\circ \psi \sim \id_K$ and  $\psi\circ \varphi \sim \id_{K'}$, and is denoted by $K\sim K'$. Here, the simplicial maps $\varphi$ and $\psi$ are called strong homotopy equivalences.
\end{definition}

\begin{definition} Let $K$ be a simplicial complex and $v_0$ be a vertex in $K$. A simplicial complex $K$ is said to be strongly collapsible if $K$ and $\{v_0\}$ is of the same strong homotopy type.
\end{definition}

\begin{definition}\cite{MVML} For simplicial maps $\varphi, \psi:K\rightarrow K'$, the contiguity distance between $\varphi$ and $\psi$, denoted by $\SD(\varphi,\psi)$, is the least integer $k\geq 0$ such that there exists a covering of $K$ by subcomplexes $\Omega_0,\Omega_1,...,\Omega_k$ with the property that $\varphi|_{\Omega_j}$ and $\psi|_{\Omega_j}$ are in the same contiguity class for all $j=0,1,...,k.$ If there is no such a covering, it is defined to be $\SD(\varphi,\psi)=\infty$.
\end{definition}

\begin{definition} \cite{FMV}
Let $K$ be a simplicial complex. For a subcomplex $\Omega\subset K$, if the inclusion $i:\Omega\hookrightarrow K$ and a constant map $c_{v_0}: \Omega\rightarrow K$, where $v_0 \in K$ is some fixed vertex, are in the same contiguity class, then $\Omega$ is called categorical.
\end{definition}

\begin{definition} \cite{FMV} Let $K$ be a simplicial complex. The simplicial Lusternik-Schnirelmann category $\scat(K)$ is the least integer $k\geq 0$ such that one can find categorical subcomplexes $\Omega_0,\Omega_1,\dots, \Omega_k$ of $K$ covering $K$.
\end{definition}

\begin{definition}\cite{ABCD} \label{DefnFarber}
Let $K$ be a simplicial complex. For a subcomplex $\Omega\subset K^n$, if there is a simplicial map $\sigma : \Omega \to K$ such that $\Delta\circ \sigma$ $\sim \iota_{\Omega}$ where $\Delta : K \to K^n$, $\Delta(v) = (v,v,\dots,v)$ is the diagonal map and $\iota_{\Omega} : \Omega \hookrightarrow K^n$ is the inclusion map, then $\Omega$ is called an $n-$Farber subcomplex.
\end{definition}

\begin{definition}\cite{ABCD}\label{DefnDTC}
Let $K$ be a simplicial complex. The $n$-th discrete topological complexity $\TC_n(K)$ of a simplicial complex $K$ is the least integer $k\geq 0$ such that one can find $n-$Farber subcomplexes $\Omega_0,\Omega_1,\dots, \Omega_k$ of $K^n$ covering $K^n$.
\end{definition}

The following theorem gives a characterisation of $n$-Farber subcomplexes.

\begin{theorem} \cite{ABCD} \label{teo3.4}
Let $\Omega \subset K^n$ be a subcomplex and $\Delta:K \rightarrow K^n$ denote the diagonal map, then the followings are equivalent.

\begin{itemize}
    \item[(1)] $\Omega$ is an $n$-Farber subcomplex.  
    \item[(2)] $(p_i)_|{}_\Omega \sim (p_j)_|{}_\Omega$ for all $i,j\in\{1,2,\ldots,n\}$. 
    \item[(3)] One of the restrictions $(p_1)_|{}_\Omega, (p_2)_|{}_\Omega, \ldots, (p_n)_|{}_\Omega $ is a section of $\Delta$ (up to contiguity).
\end{itemize}   
\end{theorem}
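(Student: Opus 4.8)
The plan is to prove the cycle of implications $(1)\Rightarrow(2)\Rightarrow(3)\Rightarrow(1)$. Two elementary facts about contiguity, both verifiable straight from the definitions, will do almost all the work. First, pre- and post-composition with a fixed simplicial map carries contiguous maps to contiguous maps (the image and preimage of a simplex under a simplicial map is again a simplex), and therefore preserves contiguity classes. Second, a tuple $(\varphi_1,\dots,\varphi_n)\colon M\to K^n$ satisfies $(\varphi_1,\dots,\varphi_n)\sim_c(\psi_1,\dots,\psi_n)$ if and only if $\varphi_i\sim_c\psi_i$ in $K$ for every $i$, since a finite subset of $V(K^n)$ is a simplex precisely when each of its coordinate projections is a simplex of $K$. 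Adjusting one coordinate at a time (and using that any simplicial map is contiguous to itself) upgrades this to: if $\varphi_i\sim\psi_i$ for all $i$, then $(\varphi_1,\dots,\varphi_n)\sim(\psi_1,\dots,\psi_n)$.

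For $(1)\Rightarrow(2)$: given $\sigma\colon\Omega\to K$ with $\Delta\circ\sigma\sim\iota_\Omega$, post-compose with $p_i$ and use the identity $p_i\circ\Delta=\id_K$ to get $\sigma=p_i\circ\Delta\circ\sigma\sim p_i\circ\iota_\Omega=(p_i)|_\Omega$. As this holds for every $i$, all the restrictions $(p_i)|_\Omega$ lie in the same contiguity class as $\sigma$, hence in the same class as one another.

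For $(2)\Rightarrow(3)$: take $\sigma:=(p_1)|_\Omega$. Then $\Delta\circ\sigma$ has all $n$ coordinate functions equal to $(p_1)|_\Omega$, whereas $\iota_\Omega$ has $i$-th coordinate $(p_i)|_\Omega$. By hypothesis $(p_1)|_\Omega\sim(p_i)|_\Omega$ for each $i$, so the coordinate-wise criterion above gives $\Delta\circ\sigma\sim\iota_\Omega$, i.e. $(p_1)|_\Omega$ is a section of $\Delta$ up to contiguity. For $(3)\Rightarrow(1)$: if some $(p_i)|_\Omega$ satisfies $\Delta\circ(p_i)|_\Omega\sim\iota_\Omega$, then $\sigma:=(p_i)|_\Omega$ is exactly the map required in Definition \ref{DefnFarber}, so $\Omega$ is $n$-Farber.

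I expect the only step needing genuine care to be the coordinate-wise behaviour of contiguity in $K^n$, specifically the passage from the single-step statement for $\sim_c$ to the statement for contiguity classes $\sim$: there one cannot change all coordinates simultaneously but must instead walk through the coordinates in turn, holding the already-modified coordinates fixed and invoking reflexivity of $\sim_c$ on those. Everything else reduces to unwinding the definitions together with $p_i\circ\Delta=\id_K$.
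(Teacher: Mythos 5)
The paper quotes this theorem from \cite{ABCD} without supplying a proof, so there is no internal argument to compare against. Your proof is correct and complete. The cycle $(1)\Rightarrow(2)\Rightarrow(3)\Rightarrow(1)$ goes through, and you have isolated and justified the one genuinely technical ingredient: contiguity in the categorical product $K^n$ is detected coordinate-wise, because by definition a subset of $V(K^n)$ is a simplex precisely when each projection to $K$ is, and passing from the single-step statement for $\sim_c$ to the statement for contiguity classes $\sim$ requires walking through the coordinates one at a time, holding the already-modified and not-yet-modified coordinates fixed via reflexivity of $\sim_c$. Combined with $p_i\circ\Delta=\id_K$, with $p_i\circ\iota_\Omega=(p_i)|_\Omega$, and with the (standard, easily checked) fact that composition with a fixed simplicial map on either side preserves contiguity classes, the three implications follow exactly as you state them; in particular $(3)\Rightarrow(1)$ is immediate from Definition~\ref{DefnFarber}.
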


If $n=2$ is in Definition~\ref{DefnFarber} and Definition~\ref{DefnDTC}, then one obtains Farber subcomplexes and discrete topological complexity, respectively, as introduced in \cite{FMMV}.

\section{Higher Contiguity Distance}

\begin{definition}
	For simplicial maps $\varphi_1,...\varphi_n:K\rightarrow K'$, the $n$-th contiguity distance between $\varphi_1,...\varphi_n:K\rightarrow K'$, denoted by $\SD(\varphi_1,...,\varphi_n)$, is the least integer $k\geq 0$ such that there exists a covering of $K$ by sobcomplexes $K_0,K_1,...,K_k$ with the property that $\varphi_1|_{K_j},\varphi_2|_{K_j},...,\varphi_n|_{K_j}:K_j\rightarrow K'$ are in the same contiguity class for all $j=0,1,...,k.$ If there is no such a covering, it is defined to be $\SD(\varphi_1,...,\varphi_m)=\infty$.
\end{definition}

\begin{proposition}
	$\SD(\varphi_1,...,\varphi_n)=\SD(\varphi_{\sigma(1)},...,\varphi_{\sigma(n)})$ holds for any permutation $\sigma$ of $\{1,...,n\}$.\qed
\end{proposition}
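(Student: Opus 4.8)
The plan is to observe that the condition defining $\SD$ is manifestly symmetric in the maps $\varphi_1,\dots,\varphi_n$. First I would record that ``being in the same contiguity class'' is an equivalence relation on the set of simplicial maps between two fixed complexes: contiguity $\sim_c$ is reflexive and symmetric, and $\sim$ is by definition its transitive closure, hence is also transitive. Consequently, to say that $\varphi_1|_{K_j},\dots,\varphi_n|_{K_j}$ ``are in the same contiguity class'' is exactly to say that the \emph{unordered} set $\{\varphi_1|_{K_j},\dots,\varphi_n|_{K_j}\}$ lies inside a single $\sim$-class of maps $K_j\to K'$; this membership condition does not depend on how the maps are listed.

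Next I would unwind the definition. Let $k=\SD(\varphi_1,\dots,\varphi_n)$ and choose a covering $K_0,K_1,\dots,K_k$ of $K$ by subcomplexes witnessing this, so that for each $j$ the maps $\varphi_1|_{K_j},\dots,\varphi_n|_{K_j}$ are in the same contiguity class. Fix a permutation $\sigma$ of $\{1,\dots,n\}$. Since $\{\varphi_{\sigma(1)}|_{K_j},\dots,\varphi_{\sigma(n)}|_{K_j}\}=\{\varphi_1|_{K_j},\dots,\varphi_n|_{K_j}\}$ as sets, the same covering $K_0,\dots,K_k$ witnesses that $\varphi_{\sigma(1)}|_{K_j},\dots,\varphi_{\sigma(n)}|_{K_j}$ are in the same contiguity class for every $j$. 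Hence $\SD(\varphi_{\sigma(1)},\dots,\varphi_{\sigma(n)})\le k=\SD(\varphi_1,\dots,\varphi_n)$.

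Finally, applying this inequality to the tuple $(\varphi_{\sigma(1)},\dots,\varphi_{\sigma(n)})$ together with the permutation $\sigma^{-1}$ gives the reverse inequality, and the two combine to the claimed equality. For the case $\SD(\varphi_1,\dots,\varphi_n)=\infty$, the first paragraph shows that no finite covering can work for any reordering, so both sides equal $\infty$. I do not expect any real obstacle here; the only points requiring care are making explicit that ``same contiguity class'' is an equivalence relation (so the order of the maps is irrelevant) and handling the $\infty$ value, after which the identical covering does the job in both directions.
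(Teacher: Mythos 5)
Your argument is correct and is precisely the observation the paper intends when it states this proposition with an immediate \qed: the defining condition of $\SD$ depends only on the unordered set of restricted maps being in a single contiguity class, so any witnessing covering for one ordering works for every permuted ordering. The explicit remarks about $\sim$ being an equivalence relation, using $\sigma^{-1}$ for the reverse inequality, and the $\infty$ case are all sound and simply spell out what the paper leaves implicit.
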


\begin{proposition}
	$\SD(\varphi_1,...,\varphi_n)=0$ if and only if $ \varphi_i \sim\varphi_{i+1}$ for each $i \in \{1,...,n-1\}$. \qed
\end{proposition}

\begin{proposition} \label{prop23}
	Given simplicial maps $\varphi_i:K\rightarrow K'$ and $\psi_i:K\rightarrow K'$ for $i\in \{1,...,n\}.$ If $\varphi_i\sim\psi_i$ for each i, then $\SD(\varphi_1,...,\varphi_n)=\SD(\psi_1,...,\psi_n).$\qed
\end{proposition}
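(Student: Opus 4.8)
The plan is to show the two $n$-th contiguity distances are equal by proving each is $\le$ the other, and by symmetry of the hypothesis ($\varphi_i \sim \psi_i$ is a symmetric relation), it suffices to prove one inequality, say $\SD(\psi_1,\dots,\psi_n) \le \SD(\varphi_1,\dots,\varphi_n)$. So first I would set $k = \SD(\varphi_1,\dots,\varphi_n)$, assume it is finite (the case $k=\infty$ being vacuous), and fix a covering of $K$ by subcomplexes $K_0, K_1, \dots, K_k$ such that $\varphi_1|_{K_j}, \dots, \varphi_n|_{K_j}$ all lie in the same contiguity class for each $j$.

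The key step is then to check that the \emph{same} covering $K_0,\dots,K_k$ witnesses $\SD(\psi_1,\dots,\psi_n) \le k$. Fix an index $j$. For each $i$ we have $\varphi_i \sim \psi_i$ as maps $K \to K'$; restricting a contiguity sequence to a subcomplex stays a contiguity sequence (if $\{v_0,\dots,v_m\}$ is a simplex of $K_j$ it is a simplex of $K$, so the contiguity condition is inherited), hence $\varphi_i|_{K_j} \sim \psi_i|_{K_j}$ for every $i$. Now on $K_j$ we already know $\varphi_1|_{K_j} \sim \varphi_2|_{K_j} \sim \dots \sim \varphi_n|_{K_j}$. Concatenating: $\psi_1|_{K_j} \sim \varphi_1|_{K_j} \sim \varphi_2|_{K_j} \sim \psi_2|_{K_j}$, and iterating one gets $\psi_i|_{K_j} \sim \psi_{i+1}|_{K_j}$ for all $i$, so all the $\psi_i|_{K_j}$ belong to one contiguity class. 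Since this holds for every $j$, the covering works and $\SD(\psi_1,\dots,\psi_n) \le k$.

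The only mild subtlety — and what I expect to be the ``obstacle,'' though it is really just a bookkeeping point — is making sure the contiguity class is an equivalence relation so that the chaining $\psi_1 \sim \varphi_1 \sim \dots \sim \varphi_n \sim \psi_n$ and the pairwise deductions are legitimate; this is immediate since $\sim$ is by definition generated by the (symmetric) relation $\sim_c$, hence reflexive, symmetric, and transitive. Finally, swapping the roles of the $\varphi_i$ and $\psi_i$ gives $\SD(\varphi_1,\dots,\varphi_n) \le \SD(\psi_1,\dots,\psi_n)$, and the two inequalities together yield the equality. (One can also phrase the whole argument as a single invocation of the fact that ``$\varphi_i|_{K_j}$ all in one class'' and ``$\psi_i|_{K_j}$ all in one class'' are equivalent once each $\varphi_i|_{K_j} \sim \psi_i|_{K_j}$, but the two-inequality presentation is cleanest.)
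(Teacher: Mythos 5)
Your proof is correct. The paper states this proposition without proof (marking it $\qed$ immediately, treating it as routine), so there is no written argument to compare against; your write-up simply fills in what the authors regard as obvious. The argument you give — take the witnessing cover $K_0,\dots,K_k$ for $\SD(\varphi_1,\dots,\varphi_n)$, note that the contiguity-class relation $\varphi_i\sim\psi_i$ is inherited by restriction to any subcomplex $K_j$ (since a simplex of $K_j$ is a simplex of $K$, so a contiguity chain for $\varphi_i$ and $\psi_i$ restricts to one for $\varphi_i|_{K_j}$ and $\psi_i|_{K_j}$), then chain $\psi_i|_{K_j}\sim\varphi_i|_{K_j}\sim\varphi_{i'}|_{K_j}\sim\psi_{i'}|_{K_j}$ using that $\sim$ is an equivalence relation, and finally run the same argument with the roles of $\varphi$ and $\psi$ swapped — is exactly the natural proof and has no gaps. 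Your explicit observation that restriction preserves the contiguity relation is the one point worth stating, and you state it correctly.
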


\begin{proposition}\label{prop3.4}
	If $1<m<n$ and $\varphi_1,...,\varphi_m,...,\varphi_n:K\rightarrow K'$ are simplicial maps, then $\SD(\varphi_1,...,\varphi_m)\leq \SD(\varphi_1,...,\varphi_n).$\qed
\end{proposition}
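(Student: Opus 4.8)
The plan is to show that the very covering which realises $\SD(\varphi_1,\dots,\varphi_n)$ is itself an admissible covering for $\SD(\varphi_1,\dots,\varphi_m)$; the underlying point is that the condition imposed on the longer tuple is formally stronger than the one imposed on the shorter tuple.

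First I would dispose of the trivial case $\SD(\varphi_1,\dots,\varphi_n)=\infty$, in which the inequality is automatic. Otherwise, writing $k=\SD(\varphi_1,\dots,\varphi_n)$, I would invoke the definition to obtain a covering of $K$ by subcomplexes $K_0,K_1,\dots,K_k$ such that for each $j\in\{0,1,\dots,k\}$ the restrictions $\varphi_1|_{K_j},\varphi_2|_{K_j},\dots,\varphi_n|_{K_j}:K_j\to K'$ all lie in one contiguity class.

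Next I would observe that, since being in the same contiguity class is an equivalence relation (transitivity is part of the definition of $\sim$), the condition ``$\varphi_1|_{K_j},\dots,\varphi_n|_{K_j}$ lie in one class'' is equivalent to ``$\varphi_i|_{K_j}\sim\varphi_{i'}|_{K_j}$ for all $i,i'\in\{1,\dots,n\}$''. Restricting the index set to $\{1,\dots,m\}\subseteq\{1,\dots,n\}$ immediately yields that $\varphi_1|_{K_j},\dots,\varphi_m|_{K_j}$ all lie in one contiguity class, for every $j$. Hence $K_0,\dots,K_k$ is an admissible covering for $\SD(\varphi_1,\dots,\varphi_m)$, and therefore $\SD(\varphi_1,\dots,\varphi_m)\le k=\SD(\varphi_1,\dots,\varphi_n)$.

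I do not expect any genuine obstacle here: the statement is pure monotonicity in the number of maps, and the only thing worth spelling out is that the ``same contiguity class'' requirement on an $n$-tuple restricts to the same requirement on any sub-tuple. The same reasoning in fact gives $\SD(\varphi_{i_1},\dots,\varphi_{i_r})\le\SD(\varphi_1,\dots,\varphi_n)$ for every subsequence $i_1<\dots<i_r$, which together with Proposition~\ref{prop23} and permutation invariance upgrades the conclusion to monotonicity under passage to arbitrary sub-collections of the maps.
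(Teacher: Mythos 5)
Your argument is correct and is exactly the one the paper leaves implicit (the proposition is stated with an immediate \qed, no written proof): a covering witnessing $\SD(\varphi_1,\dots,\varphi_n)=k$ forces all $n$ restrictions on each piece into one contiguity class, and restricting attention to the first $m$ of them shows the same covering is admissible for $\SD(\varphi_1,\dots,\varphi_m)$. Your closing remark, that combining this with permutation invariance and Proposition~\ref{prop23} gives monotonicity under arbitrary sub-collections, is a correct and mildly useful strengthening that the paper does not state explicitly.
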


\begin{example} Consider the simlicial complex $K$ given in Figure~\ref{f1}. Given the constant simplicial maps $\varphi_1, \varphi_2:K\rightarrow K$ at the vertices $\{0\}$ and $\{1\}$, respectively, and the simplicial map 
\[
\varphi_3:K\rightarrow K \hspace{0.05in} \text{by }
\]

\[
\varphi_3(\{0\})=\varphi_3(\{1\})=\{0\}, \hspace{0.05in} \varphi_3(\{2\})=\varphi_3(\{3\})=\varphi_3(\{4\})=\{1\}.
\]
\vspace{0.01in}

\begin{center}
\newcommand\size{1}
  
 \begin{tikzpicture}\label{f1}
     \def\size{2} 
    \draw[thick]  (18:\size) \foreach \a [count=\i] in {90,162,234,306}{ -- (\a:\size) } -- cycle;
    
    \draw[thick] (18:\size) \foreach \a [count=\i] in {162,306,90,234} { -- (\a:\size) } -- cycle;
    
    \foreach \i/\a in {0/18, 1/90, 2/162, 3/234, 4/306} {
        \node[black, fill=black, circle, inner sep=2pt, label={[label distance=-3pt] \a:\i}] at (\a:\size) {};
    }
    
\end{tikzpicture}
\end{center}   
\[
\textit{Figure 3.1}
\] 
	
	
	
	
\vspace{0.05in}
	
For $i,j=1,2,3$, $\varphi_i(v)\cup\varphi_j(v)$ is a simplex whenever $v$ is a vertex of $K$.

Moreover, we have the following.

$\varphi_1(\sigma)\cup\varphi_2(\sigma)$ is either $\{0\}$ or $\{1\}$ or $\{0,1\}$;\\
$\varphi_1(\sigma)\cup\varphi_3(\sigma)$ is either $\{0\}$ or $\{0,1\}$;\\
$\varphi_2(\sigma)\cup\varphi_3(\sigma)$ is either $\{1\}$ or $\{0,1\}$.
	
So one can conclude that $\varphi_1$, $\varphi_2$ and $\varphi_3$ are contiguous, so are in the same contiguity class. Therefore $\SD(\varphi_1,\varphi_2,\varphi_3)=0$.
\end{example}

\begin{proposition} \label{prop215}
	Let $\varphi_1,\varphi_2,...,\varphi_n:K\rightarrow K'$ and $\mu:M\rightarrow K$ be simplicial maps. Then we have 
	$$ \SD(\varphi_1\circ\mu,...,\varphi_n\circ\mu)\leq \SD(\varphi_1,...,\varphi_n). $$
\end{proposition}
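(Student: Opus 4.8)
The plan is to show that any covering of $K$ that witnesses $\SD(\varphi_1,\dots,\varphi_n)$ pulls back, via $\mu$, to a covering of $M$ of the same cardinality that witnesses $\SD(\varphi_1\circ\mu,\dots,\varphi_n\circ\mu)$. Concretely, suppose $\SD(\varphi_1,\dots,\varphi_n)=k$ and let $K_0,K_1,\dots,K_k$ be subcomplexes covering $K$ such that $\varphi_1|_{K_j}\sim\varphi_2|_{K_j}\sim\cdots\sim\varphi_n|_{K_j}$ for every $j$. (If $k=\infty$ there is nothing to prove.) For each $j$, set $M_j := \mu^{-1}(K_j)$. First I would check that $M_j$ is a subcomplex of $M$: if $\sigma$ is a simplex of $M$ with $\mu(\sigma)\in K_j$, then every face $\tau\subseteq\sigma$ has $\mu(\tau)\subseteq\mu(\sigma)$ a face of a simplex of $K_j$, hence $\mu(\tau)\in K_j$, so $\tau\in M_j$. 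Next, the $M_j$ cover $M$ because for any simplex $\sigma$ of $M$, $\mu(\sigma)$ lies in some $K_j$, whence $\sigma\in M_j$.

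The heart of the argument is the following elementary fact about contiguity, which I would isolate as a small lemma (or just inline): if two simplicial maps $\alpha,\beta:K\to K'$ are contiguous and $\mu:M\to K$ is any simplicial map, then $\alpha\circ\mu\sim_c\beta\circ\mu$; consequently, if $\alpha\sim\beta$ then $\alpha\circ\mu\sim\beta\circ\mu$. For contiguity: given a simplex $\{w_0,\dots,w_k\}$ of $M$, the set $\{\mu(w_0),\dots,\mu(w_k)\}$ is (the vertex set of) a simplex of $K$, so by contiguity of $\alpha$ and $\beta$ the set $\{\alpha\mu(w_0),\dots,\alpha\mu(w_k),\beta\mu(w_0),\dots,\beta\mu(w_k)\}$ is a simplex of $K'$; that is exactly the condition for $\alpha\circ\mu\sim_c\beta\circ\mu$. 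Composing a contiguity chain $\alpha=\alpha_0\sim_c\cdots\sim_c\alpha_m=\beta$ with $\mu$ on the right termwise then gives $\alpha\circ\mu\sim\beta\circ\mu$.

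With this in hand, apply it on each piece: let $i_j:M_j\hookrightarrow M$ and note $\mu\circ i_j:M_j\to K$ factors through $K_j$, i.e.\ $\mu\circ i_j = \iota_j\circ\bar\mu_j$ where $\bar\mu_j:M_j\to K_j$ and $\iota_j:K_j\hookrightarrow K$. Then $(\varphi_\ell\circ\mu)|_{M_j} = \varphi_\ell\circ\iota_j\circ\bar\mu_j = (\varphi_\ell|_{K_j})\circ\bar\mu_j$ for each $\ell$, and since $\varphi_1|_{K_j}\sim\cdots\sim\varphi_n|_{K_j}$, precomposing with $\bar\mu_j$ preserves this chain of contiguity classes by the lemma. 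Hence $(\varphi_1\circ\mu)|_{M_j}\sim\cdots\sim(\varphi_n\circ\mu)|_{M_j}$ for every $j=0,\dots,k$, so $\{M_j\}_{j=0}^k$ is a valid covering for $\SD(\varphi_1\circ\mu,\dots,\varphi_n\circ\mu)$, giving $\SD(\varphi_1\circ\mu,\dots,\varphi_n\circ\mu)\le k = \SD(\varphi_1,\dots,\varphi_n)$.

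The only mild subtlety — the step I would be most careful about — is the bookkeeping that $(\varphi_\ell\circ\mu)|_{M_j}$ really equals $(\varphi_\ell|_{K_j})\circ\bar\mu_j$ as simplicial maps $M_j\to K'$, i.e.\ that restriction and composition interact correctly once one passes to the corestriction $\bar\mu_j$ of $\mu$ to $K_j$; this is a triviality on vertices but worth stating so that the application of the contiguity lemma is clean. Everything else (that $M_j$ is a subcomplex, that the $M_j$ cover, that contiguity is preserved under right composition) is routine.
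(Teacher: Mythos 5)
Your proof is correct and takes essentially the same approach as the paper's: pull back the witnessing cover along $\mu$ via $M_j=\mu^{-1}(K_j)$ and observe that contiguity is preserved by precomposition with a simplicial map, so the contiguity chain on each $K_j$ transfers to $M_j$. The paper's version is just terser, leaving implicit the subcomplex/covering checks and the right-composition lemma that you spell out.
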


\begin{proof} 
	Let $\SD(\varphi_1,...,\varphi_n)=k.$ Then there exist subcomplexes $K_0,...,K_k$ of $K$ such that $\varphi_1|_{K_i}\sim...\sim\varphi_n|_{K_i}$ for all $i$. Define $M_i:=\mu^{-1}(K_i)$. Then 
	$$(\varphi_s\circ\mu)|_{M_i}=\varphi_s|_{K_i}\circ \mu|_{M_i}=\varphi_s\circ\mu|_{M_i}\sim \varphi_t\circ\mu|_{M_i}=\varphi_t|_{K_i}\circ \mu|_{M_i}=(\varphi_t\circ\mu)|_{M_i}$$
	for all $s,t\in \{1,...,n\}$. Therefore,  $\SD(\varphi_1\circ\mu,...,\varphi_n\circ\mu)\leq k.$
\end{proof}

The following proposition is a generalisation of Proposition \ref{prop215}.

\begin{proposition} \label{prop216}
		Let $\varphi_1,\varphi_2,...,\varphi_n:K\rightarrow K'$ and $\mu_1,\mu_2,...,\mu_n:M\rightarrow K$ be simplicial maps. If $\mu_1\sim \mu_2 \sim ... \sim \mu_n $ then we have 
		$$ \SD(\varphi_1\circ\mu_1,...,\varphi_n\circ\mu_n)\leq \SD(\varphi_1,...,\varphi_n). $$
\end{proposition}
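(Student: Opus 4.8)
The plan is to follow the proof of Proposition~\ref{prop215} almost verbatim, the only change being that the single preimage $\mu^{-1}(K_i)$ is replaced by the preimage under one fixed representative of the common contiguity class $\mu_1\sim\mu_2\sim\dots\sim\mu_n$, say $\mu_1$. First I would set $k=\SD(\varphi_1,\dots,\varphi_n)$ and fix a covering $K_0,\dots,K_k$ of $K$ by subcomplexes with $\varphi_1|_{K_i}\sim\dots\sim\varphi_n|_{K_i}$ for every $i$. Then put $M_i:=\mu_1^{-1}(K_i)$. Since $\mu_1$ is simplicial, each $M_i$ is a subcomplex of $M$; and since $\mu_1$ carries every simplex of $M$ to a simplex of $K$, which in turn lies in some $K_i$, the family $M_0,\dots,M_k$ covers $M$. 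By construction $\mu_1(M_i)\subseteq K_i$, so $\mu_1|_{M_i}$ may be regarded as a simplicial map $M_i\to K_i$.

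Next I would invoke two routine stability properties of the contiguity relation, both already used tacitly in the proof of Proposition~\ref{prop215}: (a) pre- or post-composing with a fixed simplicial map sends contiguous maps to contiguous maps, and hence a contiguity class to a contiguity class; (b) restriction to a subcomplex does the same. With these in hand, fix $i$ and $s,t\in\{1,\dots,n\}$. From $\mu_s\sim\mu_1$ on $M$ we get $\mu_s|_{M_i}\sim\mu_1|_{M_i}$ by (b), and from $\varphi_s|_{K_i}\sim\varphi_t|_{K_i}$ together with (a) applied to $\mu_1|_{M_i}$ we get $(\varphi_s|_{K_i})\circ(\mu_1|_{M_i})\sim(\varphi_t|_{K_i})\circ(\mu_1|_{M_i})$. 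Stringing these together:
\begin{align*}
(\varphi_s\circ\mu_s)|_{M_i}&=\varphi_s\circ(\mu_s|_{M_i})\sim\varphi_s\circ(\mu_1|_{M_i})=(\varphi_s|_{K_i})\circ(\mu_1|_{M_i})\\
&\sim(\varphi_t|_{K_i})\circ(\mu_1|_{M_i})=\varphi_t\circ(\mu_1|_{M_i})\sim\varphi_t\circ(\mu_t|_{M_i})=(\varphi_t\circ\mu_t)|_{M_i},
\end{align*}
where the first and last $\sim$ come from (a) and (b), and the middle one from the displayed relation above. Hence $(\varphi_1\circ\mu_1)|_{M_i}\sim\dots\sim(\varphi_n\circ\mu_n)|_{M_i}$ for every $i$, so $M_0,\dots,M_k$ exhibits $\SD(\varphi_1\circ\mu_1,\dots,\varphi_n\circ\mu_n)\le k$, as desired.

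The only genuine subtlety is to produce one covering of $M$ that simultaneously works for all $n$ compositions; taking $\mu_i^{-1}(K_j)$ with $i$ varying would give $n$ unrelated covers. Funnelling through the single map $\mu_1$ (any fixed representative of the common contiguity class would do) removes this difficulty, after which the argument is pure bookkeeping. If properties (a) and (b) are not taken as known, each has a one-line proof straight from the definition: for a simplex $\sigma$, the hypothesis that two maps are contiguous says that their images on $\sigma$ together span a simplex, and both ``apply a fixed simplicial map'' and ``restrict to a subcomplex'' preserve this property; iterating along a contiguity sequence gives the statement for contiguity classes.
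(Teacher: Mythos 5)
Your proof is correct and takes essentially the same route as the paper: both fix $M_i := \mu_1^{-1}(K_i)$ and then chain contiguities via pre- and post-composition. Your version simply spells out the intermediate steps (checking $\mu_1(M_i)\subseteq K_i$, separating the three $\sim$'s) that the paper compresses into one displayed line.
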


\begin{proof}
	Let $\SD(\varphi_1,...,\varphi_n)=k.$ Then there exist subcomplexes $K_0,...,K_k$ of $K$ such that $\varphi_1|_{K_i}\sim...\sim\varphi_n|_{K_i}$ for all $i$. Define $M_i:=\mu_1^{-1}(K_i)$. Then 
	$$(\varphi_s\circ\mu_{1})|_{M_i}=\varphi_s|_{K_i}\circ\mu_{1}|_{M_i}=\varphi_s\circ\mu_{1}|_{M_i}\sim \varphi_t\circ\mu_{t'}|_{M_i}=\varphi_t|_{K_i}\circ\mu_{t'}|_{M_i}=(\varphi_t\circ\mu_{t'})|_{M_i}$$
	for all $s,t,t' \in \{1,...,n\}.$ So $\SD(\varphi_1\circ\mu_1,...,\varphi_n\circ\mu_n)\leq k.$
\end{proof}

\begin{corollary}\label{corol21}
    Let $\mu_1,...,\mu_n: M\rightarrow K$ be the simplicial maps that satisfy the condition in Proposition~\ref{prop216} which also have right strong equivalence. Then for simplicial maps $\varphi_1,...,\varphi_n:K\rightarrow K'$, we have 
    $$\SD(\varphi_1\circ \mu_1,...,\varphi_n\circ\mu_n)=\SD(\varphi_1,...,\varphi_n).$$
\end{corollary}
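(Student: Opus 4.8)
The plan is to establish the two inequalities separately. One direction is already available: Proposition~\ref{prop216} gives $\SD(\varphi_1\circ\mu_1,\dots,\varphi_n\circ\mu_n)\leq \SD(\varphi_1,\dots,\varphi_n)$, using only that $\mu_1\sim\mu_2\sim\dots\sim\mu_n$. So the entire content of the corollary is the reverse inequality $\SD(\varphi_1,\dots,\varphi_n)\leq \SD(\varphi_1\circ\mu_1,\dots,\varphi_n\circ\mu_n)$, and this is where the hypothesis of a \emph{right strong equivalence} must be used. I read that hypothesis as follows: there is a simplicial map $\nu:K\rightarrow M$ such that $\mu_i\circ\nu\sim\id_K$ for each $i$ (a common right inverse up to contiguity class). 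Note that if each $\mu_i$ admits such a $\nu$ — or even if $\mu_1$ does, since all the $\mu_i$ are in one contiguity class and contiguity class is preserved under composition — then $\mu_i\circ\nu\sim\id_K$ simultaneously for all $i$.

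First I would apply Proposition~\ref{prop216} with the roles reversed: take the maps $\varphi_i\circ\mu_i:K\rightarrow K'$ and precompose them with $\nu:K\rightarrow K$. Since $\id_K\sim\id_K\sim\dots\sim\id_K$ trivially, Proposition~\ref{prop216} would naively only give the trivial bound; instead I want to precompose with the single map $\nu$, so the cleaner tool is Proposition~\ref{prop215} applied to the family $\varphi_i\circ\mu_i$ and the map $\nu:K\rightarrow K$, yielding
\[
\SD\bigl((\varphi_1\circ\mu_1)\circ\nu,\dots,(\varphi_n\circ\mu_n)\circ\nu\bigr)\leq \SD(\varphi_1\circ\mu_1,\dots,\varphi_n\circ\mu_n).
\]
Next I would rewrite the left-hand side using associativity as $\SD(\varphi_1\circ(\mu_1\circ\nu),\dots,\varphi_n\circ(\mu_n\circ\nu))$. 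Because $\mu_i\circ\nu\sim\id_K$ for every $i$, composing on the right preserves contiguity class, so $\varphi_i\circ(\mu_i\circ\nu)\sim\varphi_i\circ\id_K=\varphi_i$ for each $i$. Then Proposition~\ref{prop23} (invariance of $\SD$ under replacing each map by one in its contiguity class) gives
\[
\SD\bigl(\varphi_1\circ(\mu_1\circ\nu),\dots,\varphi_n\circ(\mu_n\circ\nu)\bigr)=\SD(\varphi_1,\dots,\varphi_n).
\]
Chaining these yields $\SD(\varphi_1,\dots,\varphi_n)\leq \SD(\varphi_1\circ\mu_1,\dots,\varphi_n\circ\mu_n)$, and combined with Proposition~\ref{prop216} this gives the equality.

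The main obstacle I anticipate is purely one of bookkeeping about what "right strong equivalence" means in this context and verifying that the single map $\nu$ works for all $n$ indices at once. If the hypothesis instead provides for each $i$ a potentially different right inverse $\nu_i$ with $\mu_i\circ\nu_i\sim\id_K$, one must first reconcile them: since $\mu_i\sim\mu_j$ for all $i,j$, one has $\mu_j\circ\nu_i\sim\mu_i\circ\nu_i\sim\id_K$, so in fact any one $\nu_i$ serves as a common right inverse up to contiguity, and the argument above goes through unchanged. The only other point requiring care is the elementary fact that $\alpha\sim\beta$ implies $\alpha\circ\gamma\sim\beta\circ\gamma$ and $\gamma\circ\alpha\sim\gamma\circ\beta$ — i.e.\ that composition descends to contiguity classes — which is standard and follows directly from the definition of contiguous maps. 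No genuinely hard step is involved; the corollary is a formal consequence of Propositions~\ref{prop23}, \ref{prop215}, and \ref{prop216} once the hypothesis is unpacked.
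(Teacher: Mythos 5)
Your proof is correct and follows essentially the same route as the paper: one direction is Proposition~\ref{prop216}, and the reverse direction uses the common right inverse (your $\nu$, the paper's $\alpha$) together with Propositions~\ref{prop23} and \ref{prop215}/\ref{prop216} to absorb the $\mu_i$. The remark that a single right inverse works for all $i$ because the $\mu_i$ lie in one contiguity class matches the paper's observation as well.
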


\begin{proof}
	If $\mu_i$ has right strong equivalence (for all $i=1,...,n$), then we have $\mu_i\circ \alpha\sim id_K.$ Here, since $\mu_i\sim\mu_{i+1}$ for all i, the same $\alpha$ can be used for each $i$. It follows that $\varphi\circ\mu_i\circ\alpha\sim\varphi_i$. Thus,
	\begin{eqnarray*}
	\SD(\varphi_1,...,\varphi_n)&=&\SD(\varphi_1\circ\mu_1\circ\alpha,...,\varphi_n\circ\mu_n\circ\alpha)\\
	&\leq& \SD(\varphi_1\circ\mu_1,...,\varphi_n\circ\mu_n)\\
	&\leq& \SD(\varphi_1,...,\varphi_n)
    \end{eqnarray*}
    where the equality and the inequalities follow from Proposition \ref{prop23} and Proposition \ref{prop216}, respectively. Hence, we have $\SD(\varphi_1\circ \mu_1,...,\varphi_n\circ\mu_n)=\SD(\varphi_1,...,\varphi_n).$
\end{proof}

\begin{proposition} \label{prop217}
	Let $\varphi_1,...,\varphi_n:K\rightarrow K'$ and $\mu_1,...,\mu_n:K'\rightarrow M$ be simplicial maps. If $\mu_i \sim \mu_{i+1}$ for every $i\in\{1,...,n-1\}$ then $$ \SD(\mu_1\circ\varphi_1,...,\mu_n\circ\varphi_n)\leq \SD(\varphi_1,...,\varphi_n).$$
\end{proposition}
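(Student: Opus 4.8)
The plan is to mimic the proof of Proposition~\ref{prop215} and Proposition~\ref{prop216}, but now the post-composition happens on the target side, so the covering of $K$ can be used essentially unchanged. First I would set $\SD(\varphi_1,\dots,\varphi_n)=k$ and pick subcomplexes $K_0,\dots,K_k$ of $K$ covering $K$ with $\varphi_1|_{K_j}\sim\dots\sim\varphi_n|_{K_j}$ for every $j$. Unlike the source-composition case, there is no need to pull back the covering along any map: the very same family $K_0,\dots,K_k$ will witness $\SD(\mu_1\circ\varphi_1,\dots,\mu_n\circ\varphi_n)\le k$.

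The key step is to check that on each $K_j$ the maps $\mu_1\circ\varphi_1,\dots,\mu_n\circ\varphi_n$, restricted to $K_j$, are all in a single contiguity class. For fixed $j$ and for any $s,t\in\{1,\dots,n\}$ I would write
\[
(\mu_s\circ\varphi_s)|_{K_j}=\mu_s\circ(\varphi_s|_{K_j})\sim \mu_s\circ(\varphi_t|_{K_j})\sim \mu_t\circ(\varphi_t|_{K_j})=(\mu_t\circ\varphi_t)|_{K_j}.
\]
The first contiguity uses that $\varphi_s|_{K_j}\sim\varphi_t|_{K_j}$ (from the chosen covering) together with the standard fact that composing a fixed simplicial map on the left preserves contiguity classes — i.e.\ if $\alpha\sim\beta$ then $\gamma\circ\alpha\sim\gamma\circ\beta$, which follows by applying $\gamma$ to a contiguity sequence term by term and noting that $\alpha\sim_c\alpha'$ implies $\gamma\circ\alpha\sim_c\gamma\circ\alpha'$. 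The second contiguity uses the hypothesis $\mu_1\sim\mu_2\sim\dots\sim\mu_n$ (so $\mu_s\sim\mu_t$ for all $s,t$) together with the companion fact that precomposition by a fixed simplicial map preserves contiguity classes, applied to the fixed map $\varphi_t|_{K_j}$. Chaining these gives $(\mu_s\circ\varphi_s)|_{K_j}\sim(\mu_t\circ\varphi_t)|_{K_j}$ for all $s,t$, hence in particular consecutive ones are related, which is what the definition of $n$-th contiguity distance requires on the piece $K_j$.

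Since this holds for every $j=0,1,\dots,k$ and the $K_j$ cover $K$, the definition yields $\SD(\mu_1\circ\varphi_1,\dots,\mu_n\circ\varphi_n)\le k=\SD(\varphi_1,\dots,\varphi_n)$, completing the argument. I do not expect a serious obstacle here; the only point requiring a word of care is the two-sided stability of contiguity classes under composition with a fixed simplicial map (left and right), and whether the authors wish to invoke it as a known lemma or spell out the one-line verification that $\varphi\sim_c\psi$ implies $\gamma\circ\varphi\sim_c\gamma\circ\psi$ and $\varphi\circ\gamma\sim_c\psi\circ\gamma$. Everything else is bookkeeping parallel to the proofs already given for Propositions~\ref{prop215} and~\ref{prop216}.
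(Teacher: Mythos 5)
Your proof matches the paper's: both use the same covering $K_0,\dots,K_k$ coming from $\SD(\varphi_1,\dots,\varphi_n)=k$ and the same two-step chain $(\mu_s\circ\varphi_s)|_{K_j}\sim\mu_s\circ\varphi_t|_{K_j}\sim(\mu_t\circ\varphi_t)|_{K_j}$, first changing the inner map via $\varphi_s|_{K_j}\sim\varphi_t|_{K_j}$ and then the outer one via $\mu_s\sim\mu_t$. You additionally spell out why pre- and post-composition by a fixed simplicial map preserves contiguity classes, which the paper leaves implicit, but the argument is the same.
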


\begin{proof}
	Suppose that $\SD(\varphi_1,...,\varphi_n)=k.$ There exist subcomplexes $K_0,...,K_k$ of $K$ such that $\varphi_1|_{K_i}\sim...\sim\varphi_n|_{K_i}$ for all $i$. Then 
	$$ (\mu_s\circ\varphi_{s'})|_{K_i}=\mu_s\circ\varphi_{s'}|_{K_i}\sim\mu_s\circ\varphi_{t'}|_{K_i}\sim\mu_t\circ\varphi_{t'}|_{K_i}=(\mu_t\circ\varphi_{t'})|_{K_i}$$ for all $s,s',t,t'\in \{1,...,n\}.$ So $\SD(\mu_1\circ\varphi_1,...,\mu_n\circ\varphi_n)\leq k.$
\end{proof}

\begin{corollary}\label{corol22}
	Let $\mu_1,...,\mu_n:K'\rightarrow M$ be the simplicial maps that satisfy the condition in Proposition~\ref{prop217} which also have left strong equivalence. Then for simplicial maps $\varphi_1,...,\varphi_n:K\rightarrow K'$, we have
	 $$ \SD(\mu_1\circ\varphi_1,...,\mu_n\circ\varphi_n)=\SD(\varphi_1,...,\varphi_n). $$ 
\end{corollary}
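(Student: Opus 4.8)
The plan is to mirror the proof of Corollary~\ref{corol21}, with the right strong equivalence replaced by a left one. Recall that $\mu_i$ having a left strong equivalence means there is a simplicial map $\beta_i : M \to K'$ with $\beta_i \circ \mu_i \sim \id_{K'}$. The first step is to show that a \emph{single} $\beta$ works for all indices at once: setting $\beta := \beta_1$, the hypothesis $\mu_i \sim \mu_{i+1}$ of Proposition~\ref{prop217} gives $\beta \circ \mu_1 \sim \beta \circ \mu_2 \sim \dots \sim \beta \circ \mu_n$, and combining this with $\beta \circ \mu_1 \sim \id_{K'}$ yields $\beta \circ \mu_i \sim \id_{K'}$ for every $i$. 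Consequently $\beta \circ \mu_i \circ \varphi_i \sim \id_{K'} \circ \varphi_i = \varphi_i$ for each $i$.

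Next I would invoke Proposition~\ref{prop217} twice. Applied to $\varphi_1, \dots, \varphi_n$ together with $\mu_1, \dots, \mu_n$ (which satisfy $\mu_i \sim \mu_{i+1}$), it yields
\[
\SD(\mu_1 \circ \varphi_1, \dots, \mu_n \circ \varphi_n) \leq \SD(\varphi_1, \dots, \varphi_n).
\]
Applied to $\mu_1 \circ \varphi_1, \dots, \mu_n \circ \varphi_n$ together with the constant sequence $\beta, \dots, \beta$ (which trivially satisfies $\beta \sim \beta$), it yields
\[
\SD(\beta \circ \mu_1 \circ \varphi_1, \dots, \beta \circ \mu_n \circ \varphi_n) \leq \SD(\mu_1 \circ \varphi_1, \dots, \mu_n \circ \varphi_n).
\]
Since $\beta \circ \mu_i \circ \varphi_i \sim \varphi_i$ for all $i$, Proposition~\ref{prop23} gives $\SD(\beta \circ \mu_1 \circ \varphi_1, \dots, \beta \circ \mu_n \circ \varphi_n) = \SD(\varphi_1, \dots, \varphi_n)$. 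Chaining these three relations produces $\SD(\varphi_1, \dots, \varphi_n) \leq \SD(\mu_1 \circ \varphi_1, \dots, \mu_n \circ \varphi_n) \leq \SD(\varphi_1, \dots, \varphi_n)$, which forces equality.

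The two applications of Proposition~\ref{prop217} and the single application of Proposition~\ref{prop23} are routine. The step that actually requires thought — and the only place where the hypothesis $\mu_i \sim \mu_{i+1}$ does more work than Proposition~\ref{prop217} already demands — is the first one: extracting a common left homotopy inverse $\beta$ valid for every $\mu_i$. Without such a common $\beta$ one could only relate $\beta_i \circ \mu_i \circ \varphi_i$ to $\varphi_i$ index by index, and the sequence $\beta_1, \dots, \beta_n$ need not form a contiguity chain, so Proposition~\ref{prop217} would not be applicable to it.
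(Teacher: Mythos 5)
Your proposal is correct and follows essentially the same route as the paper: a common left homotopy inverse $\beta$ is chosen, one applies Proposition~\ref{prop23} to the chain $\beta\circ\mu_i\circ\varphi_i\sim\varphi_i$ and Proposition~\ref{prop217} twice, then squeezes. The only (mild) improvement is that you spell out \emph{why} a single $\beta$ suffices — take $\beta:=\beta_1$ and propagate via $\mu_i\sim\mu_{i+1}$ — whereas the paper simply asserts that the same $\beta$ can be used.
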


\begin{proof}
	If $\mu_i$ has left strong equivalence  (for all $i=1,...,n$), then $\beta\circ\mu_i\sim id_K.$ Here, since $\mu_i\sim\mu_{i+1}$ for all i, the same $\beta$ can be used for each $i$. It follows that $\beta\circ\mu_i\circ\varphi_i\sim \varphi_i$. Thus,
	\begin{eqnarray*}
		\SD(\varphi_1,...,\varphi_n)&=& \SD(\beta\circ\mu_1\circ\varphi_1,...,\beta\circ\mu_n\circ\varphi_n) \\
		&\leq& \SD(\mu_1\circ\varphi_1,...,\mu_n\circ\varphi_n) \\
		&\leq& \SD(\varphi_1,...,\varphi_n)
	\end{eqnarray*}
	where the equality and the inequalities follow from Proposition~\ref{prop23} and Proposition~\ref{prop217}, respectively. Hence, we have $ \SD(\mu_1\circ\varphi_1,...,\mu_n\circ\varphi_n)=\SD(\varphi_1,...,\varphi_n). $
	
\end{proof}

As a result of Corollary~\ref{corol21} and Corollary~\ref{corol22}, one can write the following theorem.

\begin{theorem}\label{maininvariant} Let $\beta_1,\ldots,\beta_n,:K'\rightarrow K$ and $\alpha_1,\ldots, \alpha_n:L\rightarrow L'$ be right and left strong equivalences, respectively. If the simplicial maps $\varphi_1,\ldots, \varphi_n:K \rightarrow L$ and $\psi_1,\ldots, \psi_n:K' \rightarrow L'$ make the following diagram commutative up to contiguity for each $j=1,\ldots,n$ (i.e., $\alpha_j\circ\varphi_j\circ \beta_j \sim \psi_j$ for each $j$), then we have $\SD(\varphi_1,\ldots,\varphi_n)=\SD(\psi_1,\dots,\psi_n)$.
\begin{displaymath}
	\xymatrix{
		K \ar[r]^{\varphi_j}  &
		L \ar[d]^{\alpha_j} \\
		K' \ar[r]_{\psi_j} \ar[u]_{\beta_j} & L' }
	\end{displaymath}

\end{theorem}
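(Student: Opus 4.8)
The plan is to deduce Theorem~\ref{maininvariant} by chaining together the two corollaries that immediately precede it, inserting an intermediate family of maps so that one corollary handles the precomposition by the $\beta_j$ and the other handles the postcomposition by the $\alpha_j$. First I would observe that the hypotheses of Corollary~\ref{corol21} are met by the family $\beta_1,\ldots,\beta_n : K' \to K$: they are right strong equivalences, and since each diagram commutes up to contiguity with $\varphi_j \circ \beta_j \sim \alpha_j^{-1}\circ\psi_j$ in spirit, one needs $\beta_i \sim \beta_{i+1}$; I would note that this is forced (or should be assumed as part of ``right strong equivalence'' in the sense used in Corollary~\ref{corol21}, which already presupposes the condition in Proposition~\ref{prop216}). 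Applying Corollary~\ref{corol21} to the maps $\varphi_1,\ldots,\varphi_n : K \to L$ and $\beta_1,\ldots,\beta_n : K' \to K$ yields
\[
\SD(\varphi_1\circ\beta_1,\ldots,\varphi_n\circ\beta_n)=\SD(\varphi_1,\ldots,\varphi_n).
\]

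Next I would apply Corollary~\ref{corol22} to the simplicial maps $\varphi_1\circ\beta_1,\ldots,\varphi_n\circ\beta_n : K' \to L$ together with the left strong equivalences $\alpha_1,\ldots,\alpha_n : L \to L'$, which satisfy $\alpha_i \sim \alpha_{i+1}$ by hypothesis. This gives
\[
\SD(\alpha_1\circ\varphi_1\circ\beta_1,\ldots,\alpha_n\circ\varphi_n\circ\beta_n)=\SD(\varphi_1\circ\beta_1,\ldots,\varphi_n\circ\beta_n).
\]
Finally, the commutativity-up-to-contiguity hypothesis says $\alpha_j\circ\varphi_j\circ\beta_j \sim \psi_j$ for each $j$, so Proposition~\ref{prop23} lets me replace each $\alpha_j\circ\varphi_j\circ\beta_j$ by $\psi_j$ without changing the $n$-th contiguity distance:
\[
\SD(\alpha_1\circ\varphi_1\circ\beta_1,\ldots,\alpha_n\circ\varphi_n\circ\beta_n)=\SD(\psi_1,\ldots,\psi_n).
\]
Stringing the three equalities together gives $\SD(\varphi_1,\ldots,\varphi_n)=\SD(\psi_1,\ldots,\psi_n)$.

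The main obstacle I anticipate is bookkeeping about the side conditions $\beta_i\sim\beta_{i+1}$ and $\alpha_i\sim\alpha_{i+1}$: Corollaries~\ref{corol21} and \ref{corol22} are stated for families that ``satisfy the condition in Proposition~\ref{prop216} (resp.\ \ref{prop217})'', i.e.\ the maps in the family are pairwise in the same contiguity class, and I must make sure the theorem's hypotheses genuinely deliver this — either it is implicit in the phrase ``right/left strong equivalences'' as used here, or it should be added explicitly. A secondary subtlety is that Corollary~\ref{corol21} requires a \emph{single} common right inverse $\alpha$ for all the $\mu_i$ (here the $\beta_j$), and likewise Corollary~\ref{corol22} a single common left inverse; the proofs of those corollaries already address this, so no new work is needed, but the writeup should point to it. Everything else is a direct substitution, so the proof is short once these compatibility conditions are pinned down.
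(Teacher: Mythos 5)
Your proposal follows the paper's intended route exactly: the paper simply asserts that the theorem is ``a result of Corollary~\ref{corol21} and Corollary~\ref{corol22}'' without spelling out the chain, and your argument --- precompose via Corollary~\ref{corol21}, postcompose via Corollary~\ref{corol22}, then invoke Proposition~\ref{prop23} to replace each $\alpha_j\circ\varphi_j\circ\beta_j$ by $\psi_j$ --- is precisely that chain made explicit. The side conditions you flag ($\beta_i\sim\beta_{i+1}$ and $\alpha_i\sim\alpha_{i+1}$) are indeed required by the corollaries but are not stated in the theorem's hypotheses; the paper never addresses this, and the remark that follows (which specialises to a single $\beta$ and a single $\alpha$) suggests the authors mainly had that case, where the condition is automatic, in view.
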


\begin{remark} In particular, the above theorem is valid if one consider only one right strong equivalence $\beta$ and one left strong equivalence $\alpha$. Therefore one can say that if $K\sim K'$ and $L\sim L'$, the result of the above theorem is still valid and it can be realised as a theorem which mentions the strong homotopy invariance of the higher contiguity distance. 
\end{remark}

\begin{theorem}
	$\SD(p_1,...,p_n)=\TC_n(K)$ where $p_i:K^n\rightarrow K$ is the projection to the i-th factor for $i\in \{1,...,n\}$.
\end{theorem}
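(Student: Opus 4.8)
The plan is to observe that the coverings of $K^n$ that witness $\TC_n(K)\le k$ are literally the same coverings that witness $\SD(p_1,\ldots,p_n)\le k$, and then pass to the least such $k$. First I would unwind both sides. By the definition of $n$-th contiguity distance, $\SD(p_1,\ldots,p_n)\le k$ holds if and only if $K^n$ can be covered by subcomplexes $\Omega_0,\ldots,\Omega_k$ such that, for each $j$, the restrictions $(p_1)|_{\Omega_j},\ldots,(p_n)|_{\Omega_j}:\Omega_j\to K$ all lie in one common contiguity class. By Definition~\ref{DefnDTC}, $\TC_n(K)\le k$ holds if and only if $K^n$ can be covered by $n$-Farber subcomplexes $\Omega_0,\ldots,\Omega_k$.

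The bridge between the two is Theorem~\ref{teo3.4}. Its condition (2) states that a subcomplex $\Omega\subset K^n$ is an $n$-Farber subcomplex if and only if $(p_i)|_{\Omega}\sim (p_j)|_{\Omega}$ for all $i,j\in\{1,\ldots,n\}$. Since $\sim$ is an equivalence relation on the set of simplicial maps $\Omega\to K$, the requirement ``$(p_i)|_{\Omega}\sim (p_j)|_{\Omega}$ for all $i,j$'' is equivalent, by transitivity, to the requirement that $(p_1)|_{\Omega},\ldots,(p_n)|_{\Omega}$ all belong to a single contiguity class. Hence a subcomplex $\Omega\subset K^n$ is an $n$-Farber subcomplex precisely when $p_1,\ldots,p_n$ restrict on $\Omega$ to maps lying in the same contiguity class.

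Combining the last two observations, for every integer $k\ge 0$ the complex $K^n$ admits a covering by $k+1$ $n$-Farber subcomplexes if and only if it admits a covering by $k+1$ subcomplexes on each of which $p_1,\ldots,p_n$ are in the same contiguity class; if no finite covering of either kind exists, both invariants are $\infty$. Taking the least such $k$ on both sides yields $\SD(p_1,\ldots,p_n)=\TC_n(K)$. I do not expect a genuine obstacle here: Theorem~\ref{teo3.4} does all the work, and the only points requiring a word of care are the (elementary) reduction of pairwise contiguity equivalence to membership in one contiguity class, and the bookkeeping of the $\infty$ convention when no covering exists.
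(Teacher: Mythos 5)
Your proposal is correct and takes essentially the same approach as the paper: both arguments hinge on condition (2) of Theorem~\ref{teo3.4} to identify $n$-Farber subcomplexes with subcomplexes on which the projections lie in a single contiguity class, then translate coverings from one side to the other. The paper simply states the two inequalities separately rather than phrasing the correspondence of coverings as a single if-and-only-if, but the content is identical.
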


\begin{proof} 
	First of all, we show that $\SD(p_1,...,p_n)\leq \TC_n(K)$. Suppose that $\TC_n(K)=k$. Then there exist $n$-Farber subcomplexes $K_1,...,K_k$ covering $K^n$. From Theorem~\ref{teo3.4} it becomes $p_1|_{K_i}\sim...\sim p_n|_{K_i}$ for all i. So $\SD(p_1,...,p_n)\leq k.$ \\
	On the other hand, let $\SD(p_1,...,p_n)=k$. Then there exist subcomplexes $K_1,...,K_k$ such that $p_1|_{K_i}\sim...\sim p_n|_{K_i}$ for all i. By Theorem~\ref{teo3.4}, for each i, $K_i$ is $n$-Farber subcomplex. Therefore $\TC_n(K)\leq k.$
\end{proof}

\begin{corollary}\label{invariant} The $n-$th discrete topological complexity $\TC_n$ is an invariant of strong homotopy type.
\end{corollary}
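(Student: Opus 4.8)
The plan is to deduce this purely formally from the two facts already in hand: the identity $\SD(p_1,\dots,p_n)=\TC_n(K)$ just proved, and the strong homotopy invariance of higher contiguity distance recorded in Theorem~\ref{maininvariant} (and its Remark). So assume $K\sim K'$, witnessed by strong homotopy equivalences $\varphi:K\rightarrow K'$ and $\psi:K'\rightarrow K$ with $\varphi\circ\psi\sim\id_{K'}$ and $\psi\circ\varphi\sim\id_{K}$; the goal is $\TC_n(K)=\TC_n(K')$, equivalently $\SD(p_1,\dots,p_n)=\SD(p_1',\dots,p_n')$, where $p_i:K^n\rightarrow K$ and $p_i':(K')^n\rightarrow K'$ are the projections.

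First I would record the auxiliary fact that the categorical product preserves strong homotopy type. If $\theta_1\sim_c\theta_1'$ and $\theta_2\sim_c\theta_2'$, then $\theta_1\prod\theta_2\sim_c\theta_1'\prod\theta_2'$: for a simplex $\sigma$ of the source, the projection to each factor of the union of the images of $\sigma$ under $\theta_1\prod\theta_2$ and $\theta_1'\prod\theta_2'$ is a simplex by contiguity of $\theta_1,\theta_1'$ (resp. $\theta_2,\theta_2'$), and a subset of a categorical product is a simplex exactly when both its projections are; concatenating contiguity sequences upgrades this to the relation $\sim$. Hence $\Phi:=\varphi\prod\cdots\prod\varphi:K^n\rightarrow(K')^n$ and $\Psi:=\psi\prod\cdots\prod\psi:(K')^n\rightarrow K^n$ satisfy $\Psi\circ\Phi\sim\id_{K^n}$ and $\Phi\circ\Psi\sim\id_{(K')^n}$; in particular $\Psi$ is a right strong equivalence and $\varphi$ is a left strong equivalence.

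Next I would check that, for every $i\in\{1,\dots,n\}$, the square
\begin{displaymath}
\xymatrix{
K^n \ar[r]^{p_i} & K \ar[d]^{\varphi} \\
(K')^n \ar[r]_{p_i'} \ar[u]_{\Psi} & K' }
\end{displaymath}
commutes up to contiguity: projections commute strictly with products, so $p_i\circ\Psi=\psi\circ p_i'$, whence $\varphi\circ p_i\circ\Psi=(\varphi\circ\psi)\circ p_i'\sim p_i'$. Applying Theorem~\ref{maininvariant} with $\beta_j=\Psi$ and $\alpha_j=\varphi$ for all $j$, and with $\varphi_j=p_j$, $\psi_j=p_j'$, gives $\SD(p_1,\dots,p_n)=\SD(p_1',\dots,p_n')$, and therefore $\TC_n(K)=\TC_n(K')$ by the preceding theorem.

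The only step requiring genuine (if short) verification is the auxiliary product fact in the second paragraph; once that is in place, the statement is just an assembly of Theorem~\ref{maininvariant} and the identity $\SD(p_1,\dots,p_n)=\TC_n(K)$.
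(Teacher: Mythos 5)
Your proof is correct, and it is the derivation that the paper implicitly intends: the corollary is placed immediately after the identity $\SD(p_1,\dots,p_n)=\TC_n(K)$, and the Remark cites \cite{ABCD} rather than supplying an argument, so the reader is expected to combine that identity with Theorem~\ref{maininvariant}. You have filled in the one genuinely nontrivial step that this combination requires and that the paper leaves unstated: that a strong homotopy equivalence $\psi:K'\to K$ induces a strong homotopy equivalence $\Psi=\psi\prod\cdots\prod\psi:(K')^n\to K^n$ on categorical products, so that $\Psi$ qualifies as the right strong equivalence $\beta_j$ in Theorem~\ref{maininvariant}. Your verification of that product lemma is sound (a subset of $K_1\prod K_2$ is a simplex exactly when both projections are, and $p_i'$ commutes with the product maps), and the commutativity check $\varphi\circ p_i\circ\Psi=\varphi\circ\psi\circ p_i'\sim p_i'$ is exactly what is needed to invoke Theorem~\ref{maininvariant} with $\alpha_j=\varphi$, $\beta_j=\Psi$. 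No gaps.
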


\begin{remark} Corollary~\ref{invariant} is proved in Theorem 2.3 in \cite{ABCD} and the case for $n=2$ is proved in Theorem 3.3 in \cite{FMMV}.
\end{remark}

\begin{theorem}\label{inclusiontheorem}
	For a vertex $v_0$ in $K$, define $i_j:K^{n-1}\rightarrow K^n$ by $i_j(\sigma_1,...,\sigma_{n-1})=(\sigma_1,...,\sigma_{j-1},v_0,\sigma_j,...,\sigma_{n-1})$. Then we have $\scat(K^{n-1})=\SD(i_1,...,i_n).$
\end{theorem}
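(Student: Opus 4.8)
The plan is to prove the two inequalities $\SD(i_1,\ldots,i_n) \leq \scat(K^{n-1})$ and $\scat(K^{n-1}) \leq \SD(i_1,\ldots,i_n)$ separately, translating in each direction between a covering by categorical subcomplexes of $K^{n-1}$ and a covering on which the maps $i_1,\ldots,i_n$ are simultaneously in the same contiguity class. The conceptual bridge is the observation that each $i_j$ differs from $i_1$ only in \emph{where} the constant coordinate $v_0$ is inserted, so all the $i_j$ restricted to a subcomplex $\Omega$ are in the same contiguity class precisely when $\Omega$ behaves like a categorical subcomplex: a subcomplex on which ``inserting $v_0$ at position $j$'' can be slid to ``inserting $v_0$ at position $j+1$'' through contiguities.

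First I would handle $\SD(i_1,\ldots,i_n) \leq \scat(K^{n-1})$. Suppose $\scat(K^{n-1}) = k$, witnessed by categorical subcomplexes $\Omega_0,\ldots,\Omega_k$ covering $K^{n-1}$. For each $\Omega_\ell$ the inclusion $\iota_{\Omega_\ell}\colon \Omega_\ell \hookrightarrow K^{n-1}$ is in the same contiguity class as a constant map $c_{w_0}$ for some vertex $w_0 = (w_0^1,\ldots,w_0^{n-1}) \in K^{n-1}$. Then $i_j|_{\Omega_\ell} = i_j \circ \iota_{\Omega_\ell} \sim i_j \circ c_{w_0}$, and $i_j \circ c_{w_0}$ is the constant map of $K^{n-1}$ into $K^n$ at the vertex $(w_0^1,\ldots,w_0^{j-1},v_0,w_0^j,\ldots,w_0^{n-1})$. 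All of these target vertices lie in the single simplex (indeed they are comparable through faces of) a common simplex of $K^n$ only if... — so here the needed input is Lemma~\ref{soru}: since the relevant subcomplex $K^n$ need not be edge-path connected globally, I would instead argue directly that two constant maps into $K^n$ whose defining vertices both project, coordinatewise, into a fixed simplex of $K$ are contiguous, hence in the same contiguity class. In fact the $n$ vertices $(w_0^1,\ldots,v_0,\ldots,w_0^{n-1})$ for $j = 1,\ldots,n$ are pairwise contiguous as constant maps because their union, projected to each factor, is contained in $\{v_0, w_0^1,\ldots,w_0^{n-1}\}\cap(\text{relevant faces})$ — more carefully, any two of them span a simplex in $K^n$ since each coordinate of each is either $v_0$ or some $w_0^i$, and one checks the projection condition of the categorical product. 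This gives $i_1|_{\Omega_\ell} \sim \cdots \sim i_n|_{\Omega_\ell}$, so $\SD(i_1,\ldots,i_n) \leq k$.

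Next I would prove $\scat(K^{n-1}) \leq \SD(i_1,\ldots,i_n)$. Suppose $\SD(i_1,\ldots,i_n) = k$, witnessed by subcomplexes $\Omega_0,\ldots,\Omega_k$ covering $K^{n-1}$ with $i_1|_{\Omega_\ell} \sim \cdots \sim i_n|_{\Omega_\ell}$ for every $\ell$. I claim each $\Omega_\ell$ is categorical in $K^{n-1}$. Consider the projection $q\colon K^n \to K^{n-1}$ that deletes, say, the first coordinate; note $q \circ i_1 = \id_{K^{n-1}}$ while $q \circ i_n$ is the map $(\sigma_1,\ldots,\sigma_{n-1}) \mapsto (\sigma_1,\ldots,\sigma_{n-1})$ with... — the cleaner choice is: let $q\colon K^n \to K^{n-1}$ delete the last coordinate, so $q\circ i_n(\sigma_1,\ldots,\sigma_{n-1}) = (\sigma_1,\ldots,\sigma_{n-1})$, i.e. $q \circ i_n = \id$, whereas $q \circ i_1(\sigma_1,\ldots,\sigma_{n-1}) = (v_0,\sigma_1,\ldots,\sigma_{n-2})$. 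Applying $q \circ (-)$ preserves contiguity classes, so on $\Omega_\ell$ we get $\iota_{\Omega_\ell} = q\circ i_n|_{\Omega_\ell} \sim q\circ i_1|_{\Omega_\ell}$, and the latter is the restriction to $\Omega_\ell$ of the simplicial map $r(\sigma_1,\ldots,\sigma_{n-1}) = (v_0,\sigma_1,\ldots,\sigma_{n-2})$. This $r$ is not yet constant, so one more step is needed: iterate, using composition with all intermediate coordinate-deletions, or directly observe that $r$ factors through the ``shift-and-insert-$v_0$'' map whose $n$-fold iterate is the constant map $c_{(v_0,\ldots,v_0)}$; since $\iota_{\Omega_\ell} \sim r|_{\Omega_\ell} \sim r^2|_{\Omega_\ell} \sim \cdots$, after finitely many steps we reach a constant map, so $\Omega_\ell$ is categorical and $\scat(K^{n-1}) \leq k$.

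The main obstacle I anticipate is the bookkeeping in the second inequality: one must be careful that composing on the left by a fixed simplicial map genuinely preserves ``being in the same contiguity class'' (this is routine — it is exactly the simplicial analogue used in Propositions~\ref{prop217} and following), but then one must correctly identify which composite $q \circ i_j$ is the identity and arrange the chain of contiguities so that it terminates at an \emph{actual} constant map rather than merely a map that ``looks constant'' on $\Omega_\ell$. The first inequality's only subtlety is avoiding an unwarranted appeal to edge-path connectedness of all of $K$; the correct statement is the local one that constant maps into $K^n$ at coordinatewise-``simplex-compatible'' vertices are contiguous, which follows directly from the definition of the categorical product and of contiguity, not from Lemma~\ref{soru}.
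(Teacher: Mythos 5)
Your second inequality, $\scat(K^{n-1})\leq \SD(i_1,\ldots,i_n)$, is correct and takes a genuinely different route from the paper. You delete the last coordinate via $q:K^n\to K^{n-1}$, observe $q\circ i_n=\id$ and $q\circ i_1=r$ (the ``shift and insert $v_0$'' map), compose the contiguity $\iota_{\Omega_\ell}\sim r|_{\Omega_\ell}$ on the left by $r$ repeatedly, and reach the constant map $r^{n-1}=c_{(v_0,\ldots,v_0)}$ after $n-1$ steps. The paper instead projects to \emph{each} factor $K$: from the pair of contiguities $p_m\circ i_m|_{L_j}\sim p_m\circ i_{m+1}|_{L_j}$ it reads off $c_{v_0}\sim pr_m$ for every $m=1,\ldots,n-1$ and assembles these componentwise. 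Both are valid; yours uses a single fixed projection and iterates, the paper's uses one projection per coordinate and no iteration. Yours is arguably slicker, the paper's is more symmetric and parallels the forward direction.

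Your first inequality, $\SD(i_1,\ldots,i_n)\leq\scat(K^{n-1})$, has a genuine gap, and it is precisely the step you flagged as a subtlety and then tried to paper over. After reducing to showing the $n$ constant maps $c_{v_j}$, where $v_j=(w_0^1,\ldots,w_0^{j-1},v_0,w_0^j,\ldots,w_0^{n-1})$, are all in the same contiguity class, you assert that any two $v_j, v_{j'}$ (with $j<j'$) span a simplex of $K^n$ because ``each coordinate of each is either $v_0$ or some $w_0^i$.'' That is not sufficient: the projection of $\{v_j,v_{j'}\}$ to a middle coordinate $m$ with $j<m<j'$ is $\{w_0^{m-1},w_0^m\}$, and to coordinates $j$ and $j'$ it is $\{v_0,w_0^j\}$ and $\{w_0^{j'-1},v_0\}$ respectively. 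None of these is a simplex of $K$ in general, since $w_0$ is just an arbitrary vertex of the categorical product $K^{n-1}$, with no adjacency constraints among its coordinates. Your explicit attempt to ``avoid an unwarranted appeal to edge-path connectedness'' is what breaks the argument: that appeal is not unwarranted but essential. The paper invokes Lemma~\ref{soru} exactly here (since $K$ edge-path connected forces $K^n$ edge-path connected, so all constant maps $\Omega_\ell\to K^n$ lie in one contiguity class, which instantly gives $c_{v_j}\sim c_{v_{j'}}$). Without this hypothesis the theorem is simply false: take $K$ to be two disjoint vertices $\{a\},\{b\}$ and $n=2$ with $v_0=a$; then $\scat(K)=1$, but $K^2$ has no edges, so $i_1|_{\{b\}}=c_{(a,b)}$ and $i_2|_{\{b\}}=c_{(b,a)}$ can never be joined by a contiguity chain and $\SD(i_1,i_2)=\infty$. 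Replace your direct contiguity claim with the Lemma~\ref{soru} argument and this direction becomes correct.
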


\begin{proof}
	Assume that $\scat(K^{n-1})=k.$ Then there exist categorical subcomplexes $L_0,...,L_k$ of $K^{n-1}$ covering $K^{n-1}$.
	More precisely, the inclusion map $\overline{\iota_j}:L_j\rightarrow K^{n-1}$ and the constant map $\overline{c_{v_0}}:L_j\rightarrow K^{n-1}$ are in the same contiguity class for all $j=0,...,k$, that is, $\overline{\iota_j}\sim\overline{c_{v_0}}$ on each $L_j$. Here, $\overline{\iota_j}=(pr_1,...pr_{n-1})$ where $pr_i:K^{n-1}\rightarrow K$ is the projection to the i-th factor and $\overline{c_{v_0}}=(c_{v_0},c_{v_0},...,c_{v_0})$ is the constant map. Since $K$ is edge-path connected and by Lemma~\ref{soru}, one can choose all the constant maps as the constant map $c_{v_0}:K^{n-1}\rightarrow K$. So, $pr_1\sim c_{v_0},$ $pr_2 \sim c_{v_0},$ $\ldots,$ $pr_{n-1}\sim c_{v_0}$. \\
	
	On the other hand, 
	
	$$i_1:=(c_{v_0},pr_1,...,pr_{n-1}), $$
	$$i_2:=(pr_1,c_{v_0},...,pr_{n-1}),$$ 
	$$...$$ 
	$$i_n:=(pr_1,pr_2,...,pr_{n-1},c_{v_0}). $$
	
	
	Since from the above paragraph
	
    $$
	pr_1|_{L_j}\sim pr_2|_{L_j}\sim ... \sim pr_{n-1}|_{L_j}\sim c_{v_0}|_{L_j},$$
	
we have  
$$
	i_1|_{L_j}\sim i_2|_{L_j}\sim ... \sim i_n|_{L_j}.
	$$ 
	   
	Let us prove the other way around. Suppose that $\SD(i_1,...,i_n)=k$. Then there exist subcomplexes $L_0,...,L_k$ of $K$ covering $K$ such that $i_1|_{L_j}\sim...\sim i_n|_{L_j}$ for all $j$. Our aim is to show that the inclusion map  $\overline{\iota_j}:L_j\rightarrow K^{n-1}$ and the constant map $\overline{c_{v_0}}:L_j\rightarrow K^{n-1}$ are in the same contiguity class for all $j=0,...,k$. Let $pr_i:K^{n-1}\rightarrow K$ be the projection map to the $i$-th factor for $i=1,...,n-1$. Then $$
	pr_1\circ i_1|_{L_j}\sim pr_1\circ i_2|_{L_j},$$ $$ pr_2\circ i_2|_{L_j}\sim pr_2\circ i_3|_{L_j},$$ $$...$$ $$ pr_{n-1}\circ i_{n-1}|_{L_j}\sim pr_{n-1}\circ i_n|_{L_j} $$
	
	so that $(pr_1,...,pr_{n-1})\sim(c_{v_0},c_{v_0},...,c_{v_0}).$ Therefore $\overline{\iota_j} \sim  \overline{c_{v_0}}$.
	
\end{proof}

In the previous theorem, we studied the relation between the simplical LS category and the higher contiguity distance of the inclusion maps from $K^{n-1}$ to $K^n$, and as a result, we found that $\scat(K^{n-1})=\SD(i_1,...,i_n)$. On the other hand, one can find another equality saying that $\scat(K)=\SD(\tilde{i}_1,...,\tilde{i}_n)$ where the inclusion maps $\tilde{i_j}$ are from $K$ to $K^n$ as introduced in the following theorem.

\begin{theorem}\label{thm2.2}
	Let $v_0$ be a vertex of the simplicial complex K. For the simplicial maps $\tilde{i}_j:K\rightarrow K^n$, which take $\sigma$ to the $n$-tuple whose $j$-th factor is $\sigma$ whereas the other factors are $v_0$, we have $\scat(K)=\SD(\tilde{i}_1,...,\tilde{i}_n).$
\end{theorem}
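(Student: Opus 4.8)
The plan is to mirror the proof of Theorem~\ref{inclusiontheorem} and establish the two inequalities $\SD(\tilde{i}_1,\dots,\tilde{i}_n)\le\scat(K)$ and $\scat(K)\le\SD(\tilde{i}_1,\dots,\tilde{i}_n)$ separately. First I would isolate two elementary facts. (a) For simplicial maps into the categorical product, $(f_1,\dots,f_n)\sim_c(g_1,\dots,g_n)$ if and only if $f_i\sim_c g_i$ for every $i$; this is immediate from the definition of the categorical product, since a vertex set of $K^n$ is a simplex precisely when each of its projections is a simplex in $K$. Consequently, if $f_i\sim g_i$ for all $i$ then $(f_1,\dots,f_n)\sim(g_1,\dots,g_n)$. (b) Post-composition with a simplicial map sends a contiguity class to a contiguity class (this is already used implicitly, e.g.\ in the proof of Proposition~\ref{prop217}); in particular, applying $p_i:K^n\to K$ to the maps $\tilde{i}_j$ is legitimate, and note that $p_i\circ\tilde{i}_j=\id_K$ when $i=j$ while $p_i\circ\tilde{i}_j=c_{v_0}$ otherwise.

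For $\SD(\tilde{i}_1,\dots,\tilde{i}_n)\le\scat(K)$: assume $\scat(K)=k$ and pick categorical subcomplexes $L_0,\dots,L_k$ covering $K$. Since $K$ is edge-path connected, by Lemma~\ref{soru} one may take the common vertex $v_0$ for all of them, so that $\id_K|_{L_j}\sim c_{v_0}|_{L_j}$ on each $L_j$. The restriction $\tilde{i}_s|_{L_j}$ is the $n$-tuple having $\id_K|_{L_j}$ in the $s$-th coordinate and $c_{v_0}|_{L_j}$ in the others; replacing the $s$-th coordinate using $\id_K|_{L_j}\sim c_{v_0}|_{L_j}$ and leaving the rest untouched, fact (a) yields $\tilde{i}_s|_{L_j}\sim(c_{v_0},\dots,c_{v_0})$ for every $s$. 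Hence $\tilde{i}_1|_{L_j}\sim\cdots\sim\tilde{i}_n|_{L_j}$ for all $j$, so $\SD(\tilde{i}_1,\dots,\tilde{i}_n)\le k$.

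For $\scat(K)\le\SD(\tilde{i}_1,\dots,\tilde{i}_n)$: assume $\SD(\tilde{i}_1,\dots,\tilde{i}_n)=k$ and pick subcomplexes $L_0,\dots,L_k$ covering $K$ with $\tilde{i}_1|_{L_j}\sim\cdots\sim\tilde{i}_n|_{L_j}$ for each $j$. Post-composing the relation $\tilde{i}_1|_{L_j}\sim\tilde{i}_2|_{L_j}$ with $p_1$ and using fact (b) gives $\id_K|_{L_j}=p_1\circ\tilde{i}_1|_{L_j}\sim p_1\circ\tilde{i}_2|_{L_j}=c_{v_0}|_{L_j}$; that is, the inclusion $L_j\hookrightarrow K$ lies in the same contiguity class as the constant map $c_{v_0}$, so $L_j$ is categorical. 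Therefore $\scat(K)\le k$, and combining the two bounds gives the stated equality. I expect no serious obstacle here; the only point requiring care is the basepoint bookkeeping in the first inequality — ensuring all categorical pieces can be taken with the same vertex $v_0$ that appears in the definition of $\tilde{i}_j$ — which is precisely why edge-path connectedness of $K$ (already assumed for Theorem~\ref{inclusiontheorem}) is invoked through Lemma~\ref{soru}, and the product-contiguity equivalence of fact (a), though routine, is worth recording explicitly since it is used repeatedly.
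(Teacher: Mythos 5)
Your proof is correct and follows essentially the same route as the paper's: for $\SD\le\scat$ you express $\tilde{i}_s|_{L_j}$ as an $n$-tuple with $\iota$ in slot $s$ and constants elsewhere and use product-contiguity to collapse everything to $(c_{v_0},\dots,c_{v_0})$, while for $\scat\le\SD$ you post-compose with $p_1$ to recover $\iota\sim c_{v_0}$ — exactly the two steps in the paper, which phrases the first direction as $\tilde{i}_s|_{L_j}=(c_{v_0},\dots,\iota,\dots,c_{v_0})\circ\Delta_{L_j}$ but this is the same idea. Your explicit statement of the product-contiguity lemma and the edge-path-connectedness bookkeeping for the basepoint are welcome extra precision that the paper leaves implicit.
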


\begin{proof}
	Suppose that $\scat(K)=k.$ Then there exist categorical subcomplexes $L_0,...,L_k$ of $K$ covering $K$, that is, the inclusion map $\iota:L_i\hookrightarrow K$ and the constant map $c_{v_0}:L_i\rightarrow K$ are in the same contiguity class for all $i=0,1,\ldots,k$.
	
   We can write $\tilde{i}_j|_{L_i}$ as a composition as follows, for each $i$ and $j$:
   
	$$ \tilde{i}_1|_{L_i}=(\iota,c_{v_0},\ldots, c_{v_0})\circ \Delta_{L_i} $$
	$$ \tilde{i}_2|_{L_i}=(c_{v_0},\iota, c_{v_0},\ldots, c_{v_0})\circ \Delta_{L_i} $$
	 $$\ldots$$ 
	 $$ \tilde{i}_n|_{L_i}=(c_{v_0},\ldots, c_{v_0},\iota)\circ \Delta_{L_i} $$
	 
	 where $\Delta_{L_i}$ is the diagonal map for $L_i$.
	 
	 Since $L_i$ is categorical, $\iota\sim c_{v_0}$ on each $L_i$. Hence 
	 
	 $$ (\iota, c_{v_0},\ldots, c_{v_0})\sim (c_{v_0}, c_{v_0},\ldots, c_{v_0}) $$
	 $$ (c_{v_0},\iota, c_{v_0},\ldots, c_{v_0}) \sim (c_{v_0}, c_{v_0},\ldots, c_{v_0}) $$
	 $$\ldots$$ 
	 $$ (c_{v_0},\ldots, c_{v_0},\iota) \sim (c_{v_0}, c_{v_0},\ldots, c_{v_0}). $$
	 
	 Therefore $\tilde{i}_1|_{L_i}\sim \tilde{i}_2|_{L_i}\sim ... \sim \tilde{i}_n|_{L_i}$ which proves our claim. 
	 
	 On the other hand, suppose that $\SD(\tilde{i}_1,\ldots,\tilde{i}_n)=k.$ Then there exist subcomplexes $L_0,...,L_k$ of $K$ covering $K$ such that $\tilde{i}_1|_{L_i}\sim\ldots\sim \tilde{i}_n|_{L_i}$ for all $i$. Let us consider the first projection $p_1:K^n\rightarrow K$, then $$p_1\circ \tilde{i}_1|_{L_i}\sim p_1\circ \tilde{i}_2|_{L_i}\sim...\sim p_1\circ \tilde{i}_n|_{L_i}. $$
	 
	Therefore, we obtain $\iota\sim c_{v_0}$ which completes the proof.
	 
\end{proof}

\begin{corollary} The simplicial Lusternik-Schnirelmann category is an invariant of strong homotopy type.
\end{corollary}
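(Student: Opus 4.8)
The plan is to deduce this from the machinery already assembled: the identification $\scat(K)=\SD(\tilde i_1,\dots,\tilde i_n)$ of Theorem~\ref{thm2.2} together with the strong homotopy invariance of higher contiguity distance from Theorem~\ref{maininvariant}. Fix $n\ge 2$ and suppose $K\sim K'$ via strong equivalences $\varphi\colon K\to K'$ and $\psi\colon K'\to K$ with $\varphi\circ\psi\sim\id_{K'}$ and $\psi\circ\varphi\sim\id_K$. Pick a vertex $v_0\in K$ and use $v_0':=\varphi(v_0)$ as the basepoint for the maps $\tilde i_j'\colon K'\to (K')^n$.

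First I would record the auxiliary fact that the $n$-fold categorical power of a strong equivalence is again a strong equivalence. Writing $\varphi^{\times n}\colon K^n\to (K')^n$ and $\psi^{\times n}\colon (K')^n\to K^n$ for the product maps, the claim is that $\psi^{\times n}\circ\varphi^{\times n}\sim\id_{K^n}$ and $\varphi^{\times n}\circ\psi^{\times n}\sim\id_{(K')^n}$. The key point is that a simplex of a categorical product is precisely a tuple of vertices whose projection to each factor is a simplex; consequently a single contiguity performed in one coordinate (with the other coordinates left fixed) induces a contiguity of the associated product maps, and concatenating the factorwise contiguity chains witnessing $\psi\circ\varphi\sim\id_K$ in each coordinate — after padding them to a common length — yields $\psi^{\times n}\circ\varphi^{\times n}\sim\id_{K^n}$, and symmetrically for the other composite. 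In particular $\psi$ is a right strong equivalence and $\varphi^{\times n}$ is a left strong equivalence in the sense demanded by Theorem~\ref{maininvariant}.

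Next I would check, for each $j\in\{1,\dots,n\}$, that the square with top arrow $\tilde i_j\colon K\to K^n$, bottom arrow $\tilde i_j'\colon K'\to (K')^n$, left arrow $\beta_j:=\psi$ and right arrow $\alpha_j:=\varphi^{\times n}$ commutes up to contiguity, i.e. $\varphi^{\times n}\circ\tilde i_j\circ\psi\sim\tilde i_j'$. Indeed, for a simplex $\tau$ of $K'$ the tuple $\varphi^{\times n}(\tilde i_j(\psi(\tau)))$ has $j$-th coordinate $\varphi(\psi(\tau))$ and all other coordinates the constant vertex $\varphi(v_0)=v_0'$, while $\tilde i_j'(\tau)$ has $j$-th coordinate $\tau$ and all other coordinates $v_0'$; since $\varphi\circ\psi\sim\id_{K'}$ and a contiguity in the single $j$-th coordinate induces a contiguity of the corresponding $n$-tuples (again by the factorwise description of simplices in a categorical product), these two maps lie in the same contiguity class. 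Applying Theorem~\ref{maininvariant} with $\beta_j=\psi$ and $\alpha_j=\varphi^{\times n}$ for every $j$ gives $\SD(\tilde i_1,\dots,\tilde i_n)=\SD(\tilde i_1',\dots,\tilde i_n')$, and Theorem~\ref{thm2.2} applied to $K$ and to $K'$ then yields $\scat(K)=\SD(\tilde i_1,\dots,\tilde i_n)=\SD(\tilde i_1',\dots,\tilde i_n')=\scat(K')$.

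The only step that is not pure formalism is the auxiliary fact in the second paragraph — that factorwise contiguities assemble into contiguities of tuple-valued maps, hence that categorical powers preserve strong equivalences; once that is available, the rest is a direct bookkeeping application of Theorems~\ref{thm2.2} and~\ref{maininvariant}. One should also note that the choice of basepoint is immaterial: Theorem~\ref{thm2.2} is stated for an arbitrary fixed vertex, so taking $v_0'=\varphi(v_0)$ on the $K'$ side is harmless, and $\scat$ itself does not depend on the basepoint.
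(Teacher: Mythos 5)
Your proof is correct and follows the paper's own route exactly: the paper's proof of this corollary is the one-line citation ``This follows from Theorem~\ref{maininvariant} and Theorem~\ref{thm2.2},'' and you have supplied precisely the bookkeeping that this citation elides. In particular, your auxiliary observation that the categorical power $\varphi^{\times n}$ of a strong equivalence is again a strong equivalence (argued via factorwise contiguities in the categorical product), together with the verification that $\varphi^{\times n}\circ\tilde i_j\circ\psi\sim\tilde i_j'$, is the one non-trivial step the paper leaves implicit, and your treatment of it is sound.
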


\begin{proof} This follows from Theorem~\ref{maininvariant} and Theorem~\ref{thm2.2}. 

\end{proof}
The continuous version of Theorem~\ref{inclusiontheorem} can be realised as a discretisation of Theorem 3.4 in \cite{BV} whereas Theorem~\ref{thm2.2} is not yet given in the literature. So we introduce its continuous correspondence as Theorem~\ref{continuous}.

\begin{theorem}\label{continuous} Fix a point $x_0$ in $X$ and consider the inclusions $j_i:X\rightarrow X^n$ given by $j_i(x)=(x_0^1,...,x_0^{i-1},x,x_0^{i+1},...,x_0^{n})$ for $i=1,...,n$, here the superscripts of $x_0$'s stand for to emphasize in which factor $x_0$'s belong to. Then $D(j_1,...,j_n)=\cat(X).$
\end{theorem}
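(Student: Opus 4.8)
The plan is to mimic the proof of Theorem~\ref{thm2.2}, replacing contiguity classes by ordinary homotopies and simplicial complexes by topological spaces, and invoking the continuous homotopic distance $\D$ in place of $\SD$. Throughout I will use the standard facts about $\D$ that are the continuous analogues of Propositions~\ref{prop215}--\ref{prop217}: $\D$ is homotopy invariant, and $\D(f_1\circ g,\ldots,f_n\circ g)\le \D(f_1,\ldots,f_n)$ for any map $g$ into the common domain; these are exactly the facts used in the simplicial argument. I will also use that $\cat(X)=\D(c_{x_0}\circ p,\ldots)$-type characterisations are unnecessary — what I really need is only the defining property that $\cat(X)\le k$ iff $X$ is covered by $k+1$ open (or, in the nice CW setting, closed) sets each of which contracts in $X$, i.e. on which the inclusion is homotopic to the constant map $c_{x_0}$.

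First I would prove $\D(j_1,\ldots,j_n)\le\cat(X)$. Suppose $\cat(X)=k$, so there is a covering $X=U_0\cup\cdots\cup U_k$ with each inclusion $\iota\colon U_i\hookrightarrow X$ nullhomotopic, say $\iota\simeq c_{x_0}$ on $U_i$. On $U_i$ write each $j_\ell|_{U_i}$ as a composite
\begin{displaymath}
j_\ell|_{U_i}=(c_{x_0},\ldots,c_{x_0},\iota,c_{x_0},\ldots,c_{x_0})\circ\Delta_{U_i},
\end{displaymath}
where $\iota$ sits in the $\ell$-th slot and $\Delta_{U_i}\colon U_i\to U_i^n$ is the diagonal. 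Since $\iota\simeq c_{x_0}$ on $U_i$, each of the $n$ maps $(c_{x_0},\ldots,\iota,\ldots,c_{x_0})$ is homotopic to the constant map $(c_{x_0},\ldots,c_{x_0})\colon U_i^n\to X^n$, hence $j_1|_{U_i}\simeq j_2|_{U_i}\simeq\cdots\simeq j_n|_{U_i}$ after composing with $\Delta_{U_i}$. Thus the same covering $\{U_i\}$ witnesses $\D(j_1,\ldots,j_n)\le k$.

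For the reverse inequality, suppose $\D(j_1,\ldots,j_n)=k$, so there is a covering $X=U_0\cup\cdots\cup U_k$ with $j_1|_{U_i}\simeq\cdots\simeq j_n|_{U_i}$ for every $i$. Composing with the first projection $p_1\colon X^n\to X$ gives $p_1\circ j_1|_{U_i}\simeq\cdots\simeq p_1\circ j_n|_{U_i}$; but $p_1\circ j_1=\id_X$ while $p_1\circ j_\ell=c_{x_0}$ for $\ell\ge 2$, so on each $U_i$ the inclusion $\iota=\id_X|_{U_i}$ is homotopic to $c_{x_0}$, i.e. $U_i$ is categorical. Hence $\cat(X)\le k$, and combining the two inequalities yields $\D(j_1,\ldots,j_n)=\cat(X)$.

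The only delicate point is bookkeeping about open versus closed covers: the definition of $\D$ (as in \cite{BV}) should be taken with the same kind of cover as the ambient definition of $\cat$, and one should note that the preimages under $\Delta_{U_i}$ and compositions with $p_1$ do not change the subsets $U_i$ of $X$ that are being used, so no regularity is lost in either direction. Apart from that, the argument is a direct transcription of the proof of Theorem~\ref{thm2.2}, with ``$\sim$'' replaced by ``$\simeq$'' and ``subcomplex'' replaced by ``open subset''; I do not anticipate any genuine obstacle beyond making sure the continuous analogues of Propositions~\ref{prop215} and \ref{prop217} are cited correctly from \cite{MVML} or \cite{BV}.
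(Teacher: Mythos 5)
Your proof is correct. For the inequality $\cat(X)\le\D(j_1,\ldots,j_n)$ you use essentially the same device as the paper: compose the chain of homotopies with a projection $p_i\colon X^n\to X$ to turn $j_i|_{U}\simeq j_{i+1}|_{U}$ into $\id|_{U}\simeq c_{x_0}|_{U}$. You observe that $p_1$ alone suffices, since $p_1\circ j_1=\id_X$ and $p_1\circ j_\ell=c_{x_0}$ for $\ell\ge 2$; the paper instead uses $pr_i$ applied to the homotopy between $j_i$ and $j_{i+1}$ for each $i$, which is slightly redundant but the same idea. For the converse inequality $\D(j_1,\ldots,j_n)\le\cat(X)$ your route is genuinely different from the paper's. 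You transcribe the argument of Theorem~\ref{thm2.2}: factor $j_\ell|_{U_i}$ as $(c_{x_0},\ldots,\iota,\ldots,c_{x_0})\circ\Delta_{U_i}$, note that $\iota\simeq c_{x_0}$ on a categorical $U_i$ forces each product map $(c_{x_0},\ldots,\iota,\ldots,c_{x_0})$ to be homotopic to the constant $(c_{x_0},\ldots,c_{x_0})$, and conclude all the $j_\ell|_{U_i}$ are mutually homotopic. The paper instead writes down an explicit piecewise homotopy $H^i(x,t)$ that runs $j_i(F(x,2t))$ down to the constant $(x_0,\ldots,x_0)$ and back up along $j_{i+1}(F(x,2-2t))$, verifying continuity at $t=1/2$. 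Both establish the same underlying fact — each $j_\ell|_{U_i}$ is nullhomotopic to the constant at $(x_0,\ldots,x_0)$ — but your factor-through-the-diagonal argument is more conceptual and makes the analogy with the simplicial Theorem~\ref{thm2.2} explicit, whereas the paper's construction is more hands-on and self-contained, not relying on homotopy invariance of product maps. Both are valid; your closing caveat about matching open covers is appropriate and does not cause a problem in the standard setting where $\D$ and $\cat$ are both defined via open covers.
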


\begin{proof}
	Let $D(j_1,...,j_n)=k.$ Then there exist open subsets $U_0,...,U_k$ of X covering X such that $j_1|_{U_s}\simeq...\simeq j_n|_{U_s}$ for all $s=0,...,k.$ Assume that for each $s=0,1,\ldots,k$, there exists a homotopy $H_{s}^{i}:U_s\times I\rightarrow X^n$ such that $H_{s}^{i}(x,0)=j_i(x)$ and $H_{s}^{i}(x,1)=j_{i+1}(x)$ for each $i=1,...,n-1.$ We want to show that $id|_{U_s}\simeq c|_{U_s}$ for $s=0,...,k$ where $id:X\rightarrow X $ is the identity map and $c:X\rightarrow X$ is a constant map.
	For each $s=0,\ldots,k$, define $F_{s}^{i}:U_s\times I\rightarrow X$ by $F_{s}^{i}(x,t)=pr_i\circ H_{s}^{i}(x,t)$ where $pr_i:X^n\rightarrow  X$ is the projection map into the $i$-th factor and which satisfies 
\[
	F_{s}^{i}(x,0)=pr_i\circ H_{s}^{i}(x,0)=pr_i\circ j_i(x)=x, 
\]	
\[ 
	F_{s}^{i}(x,1)=pr_i\circ H_{s}^{i}(x,1)= pr_i\circ j_{i+1}(x)=x_0^{i}. 
\]

	Thus, $\cat(X)\leq D(j_1,...,j_n)$. \\

	On the other hand, assume that $\cat(X)=k$. There exists a categorical cover $U_0,...,U_k$ of X such that the inclusion map $\iota:X\rightarrow X$ and the constant map $c:X\rightarrow X $ are homotopic. For simplicity, let us write $U$'s without indices. Consider the homotopy $F:U\times I \rightarrow X,$ satisfying $F(x,0)=x$ and $F(x,1)=x_0$. Define the map $H^i:U\times I\rightarrow X$ by 
	$$ H^i(x,t)=
	\begin{cases}
		j_i(F(x,2t)), &\textrm{if  } 0\leq t\leq\frac{1}{2} \\
	j_{i+1}(F(x,2-2t)), &\textrm{if  } \frac{1}{2}\leq t\leq 1,
	\end{cases}
	$$ so that $H^i(x,0)=j_i(x)$ and $H^i(x,1)=j_{i+1}(x)$ are satisfied. Since $$
	H^i(x,1/2)=j_i(F(x,1))=j_i(x_0)=(x_0,x_0,...,x_0)=j_{i+1}(F(x,1)), $$ $H^i$ is continuous for each $i=1,\ldots,n-1$. Hence $D(j_1,...,j_n)\leq \cat(X)$.

\end{proof}

\begin{proposition}\label{prop38}
	Let $K,K'$ and $K''$ be simplicial complexes, $\eta,\eta':K''\rightarrow K$ and $\varphi_1,\dots,\varphi_n:K\rightarrow K'$ be simplicial maps. If $\varphi_1\circ\eta'\sim\dots\sim\varphi_n\circ\eta'$, then $\SD(\varphi_1\circ\eta,\dots,\varphi_n\circ\eta)\leq \SD(\eta,\eta')$.
\end{proposition}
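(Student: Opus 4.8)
The plan is to take the cover that realizes $\SD(\eta,\eta')$ and show it already works for $\SD(\varphi_1\circ\eta,\dots,\varphi_n\circ\eta)$. If $\SD(\eta,\eta')=\infty$ there is nothing to prove, so assume $\SD(\eta,\eta')=k<\infty$; then there are subcomplexes $L_0,\dots,L_k$ covering $K''$ with $\eta|_{L_j}\sim\eta'|_{L_j}$ for every $j=0,\dots,k$. I claim that on each $L_j$ the restricted maps $\varphi_1\circ\eta|_{L_j},\dots,\varphi_n\circ\eta|_{L_j}$ lie in a single contiguity class, which immediately gives $\SD(\varphi_1\circ\eta,\dots,\varphi_n\circ\eta)\le k$.

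The two elementary facts doing the work are the standard stability properties of contiguity classes, already exploited in the proofs of Propositions~\ref{prop215}--\ref{prop217}: first, contiguity class is preserved by pre- and post-composition (if $f\sim g$ then $h\circ f\sim h\circ g$ and $f\circ e\sim g\circ e$, obtained by applying $h$, resp.\ precomposing $e$, to each map in a contiguity sequence); second, contiguity class restricts to subcomplexes (if $f\sim g\colon A\to B$ and $A'\subseteq A$ is a subcomplex, then $f|_{A'}\sim g|_{A'}$, obtained by restricting each map in a contiguity sequence). Note that the hypothesis $\varphi_1\circ\eta'\sim\cdots\sim\varphi_n\circ\eta'$ is stated on all of $K''$, so the restriction principle is what lets us transport it to each $L_j$.

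With these in hand, fix $j$ and $i\in\{1,\dots,n-1\}$ and chain three contiguities on $L_j$:
\[
\varphi_i\circ\eta|_{L_j}\ \sim\ \varphi_i\circ\eta'|_{L_j}\ \sim\ \varphi_{i+1}\circ\eta'|_{L_j}\ \sim\ \varphi_{i+1}\circ\eta|_{L_j}.
\]
The first and last steps come from post-composing $\eta|_{L_j}\sim\eta'|_{L_j}$ with $\varphi_i$ and with $\varphi_{i+1}$ respectively; the middle step is the restriction to $L_j$ of the hypothesis $\varphi_i\circ\eta'\sim\varphi_{i+1}\circ\eta'$. Since this holds for every $i$, we obtain $\varphi_1\circ\eta|_{L_j}\sim\cdots\sim\varphi_n\circ\eta|_{L_j}$ for all $j$, so $\{L_0,\dots,L_k\}$ witnesses $\SD(\varphi_1\circ\eta,\dots,\varphi_n\circ\eta)\le k=\SD(\eta,\eta')$.

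There is no genuine obstacle here: the argument is a short diagram-chase at the level of contiguity classes, structurally parallel to Propositions~\ref{prop215} and \ref{prop217}. The only point requiring a moment's care is bookkeeping --- one must not forget to restrict the global hypothesis $\varphi_i\circ\eta'\sim\varphi_{i+1}\circ\eta'$ down to $L_j$ before splicing it into the chain, since every other link of the chain already lives on $L_j$.
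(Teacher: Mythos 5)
Your proof is correct and takes essentially the same approach as the paper: take the cover witnessing $\SD(\eta,\eta')$, post-compose the contiguity $\eta|_{L_j}\sim\eta'|_{L_j}$ with each $\varphi_i$, and splice in the restricted global hypothesis $\varphi_i\circ\eta'\sim\varphi_{i+1}\circ\eta'$ to close the chain. Your explicit three-step chain is just a slightly more detailed rendering of the paper's argument.
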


\begin{proof}
	Assume that $\SD(\eta,\eta')=k$. There exist subcomlexes $K_0,\dots,K_k$ of $K$ such that $\eta|_{K_i}\sim\eta'|_{K_i}$ for $i=1,\dots,k$. We have 
	$$ \varphi_1\circ\eta|_{K_i}\sim \varphi_1\circ\eta'|_{K_i}$$
	$$ \varphi_2\circ\eta|_{K_i}\sim \varphi_2\circ\eta'|_{K_i}$$
	$$\dots$$
	$$ \varphi_n\circ\eta|_{K_i}\sim \varphi_n\circ\eta'|_{K_i}$$
	for all $i=0,\dots,k.$ Since $\varphi_1\circ\eta'\sim\dots\sim\varphi_n\circ\eta'$, we conlude that $\varphi_1\circ\eta|_{K_i}\sim \varphi_2\circ\eta|_{K_i} \sim \dots \sim \varphi_n\circ\eta|_{K_i}$. Hence $\SD(\varphi_1\circ\eta,\dots,\varphi_n\circ\eta)\leq k$.
  
\end{proof}

\begin{corollary}\label{yeni2} If $\varphi_1, \ldots, \varphi_n: K\rightarrow K'$ are arbitrary simplicial maps, then $\SD(\varphi_1, \ldots, \varphi_n)\leq \scat(K)$.
\end{corollary}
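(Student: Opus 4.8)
The plan is to read this off from Proposition~\ref{prop38} by working over a categorical cover. First I would put $k=\scat(K)$ and fix categorical subcomplexes $\Omega_0,\dots,\Omega_k$ covering $K$; for each $j$ there is a vertex $v_j$ of $K$ with $\iota_{\Omega_j}\sim c_{v_j}$, where $\iota_{\Omega_j}\colon\Omega_j\hookrightarrow K$ is the inclusion and $c_{v_j}\colon\Omega_j\to K$ the constant map at $v_j$. In particular $\SD(\iota_{\Omega_j},c_{v_j})=0$ when both are regarded as maps into $K$.

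Next, for a fixed $j$ I would apply Proposition~\ref{prop38} with $K''=\Omega_j$, $\eta=\iota_{\Omega_j}$, $\eta'=c_{v_j}$ and the given maps $\varphi_1,\dots,\varphi_n\colon K\to K'$. Its hypothesis demands $\varphi_1\circ c_{v_j}\sim\dots\sim\varphi_n\circ c_{v_j}$; but these are the constant maps $\Omega_j\to K'$ at the vertices $\varphi_1(v_j),\dots,\varphi_n(v_j)$, and they lie in one contiguity class by Lemma~\ref{soru}, since $K'$ is edge-path connected. The proposition then gives
\[
\SD\bigl(\varphi_1|_{\Omega_j},\dots,\varphi_n|_{\Omega_j}\bigr)=\SD\bigl(\varphi_1\circ\iota_{\Omega_j},\dots,\varphi_n\circ\iota_{\Omega_j}\bigr)\le\SD(\iota_{\Omega_j},c_{v_j})=0 ,
\]
so $\varphi_1|_{\Omega_j}\sim\dots\sim\varphi_n|_{\Omega_j}$. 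Doing this for every $j$ exhibits $\Omega_0,\dots,\Omega_k$ as a covering of $K$ on each member of which $\varphi_1,\dots,\varphi_n$ are in the same contiguity class, whence $\SD(\varphi_1,\dots,\varphi_n)\le k=\scat(K)$.

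The only delicate point — the step I expect to be the main obstacle — is the verification of the hypothesis of Proposition~\ref{prop38}: one must know that the constant maps $c_{\varphi_i(v_j)}\colon\Omega_j\to K'$ all sit in a single contiguity class, which is precisely Lemma~\ref{soru} and is where edge-path connectedness of $K'$ enters. (Some such connectedness assumption is genuinely needed: if $K$ is a single vertex, $K'$ a two-point complex and $\varphi_1,\varphi_2$ the two constant maps, then $\SD(\varphi_1,\varphi_2)=\infty>0=\scat(K)$.) Beyond that, the proof is just an unwinding of the definitions of $\scat$ and of a categorical subcomplex, combined with the already-established Proposition~\ref{prop38}, and requires no further estimates.
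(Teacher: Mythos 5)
Your proof is correct and rests on the same key lemma as the paper's, namely Proposition~\ref{prop38}; the difference is only in how you invoke it. The paper applies Proposition~\ref{prop38} once globally, with $K''=K$, $\eta=\id_K$, $\eta'=c_{v_0}$, and then identifies $\SD(\id_K,c_{v_0})=\scat(K)$ directly (this identification is immediate from the definitions: $\id_K|_{\Omega}\sim c_{v_0}|_{\Omega}$ is exactly the statement that $\Omega$ is categorical). You instead unwind the categorical cover first and apply Proposition~\ref{prop38} once on each $\Omega_j$ with $\eta=\iota_{\Omega_j}$, $\eta'=c_{v_j}$, concluding $\SD(\varphi_1|_{\Omega_j},\dots,\varphi_n|_{\Omega_j})=0$ member by member. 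Both routes are valid and of comparable length; the paper's is slightly more economical because the identity $\SD(\id_K,c_{v_0})=\scat(K)$ absorbs the cover argument. Your remark about edge-path connectedness of $K'$ is well taken: the hypothesis $\varphi_1\circ\eta'\sim\dots\sim\varphi_n\circ\eta'$ of Proposition~\ref{prop38} reduces to all the constant maps $c_{\varphi_i(v_0)}$ lying in one contiguity class, which by Lemma~\ref{soru} is equivalent to $K'$ being edge-path connected; the paper's proof needs this too but leaves it tacit, and your counterexample with $K$ a point and $K'$ two disjoint vertices shows the assumption is genuinely necessary.
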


\begin{proof}
	If we take $K''=K, \eta=id_K$ and $\eta'=c_{v_0}$ in Proposition \ref{yeni2} where $id_K$ is the identity map and $c_{v_0}$ is the constant map. By Proposition \ref{yeni2}, we have
	$$ \SD(\varphi_1,\dots,\varphi_n)=\SD(\varphi_1\circ id_K,\dots,\varphi_n\circ id_K)\leq \SD(id_K,c_{v_0})=scat(K) $$
\end{proof}

The importance of Corollary~\ref{yeni2} comes from the following fact. One can say that the contiguity distance of arbitrary two maps can be used to find a lower bound for simplicial LS-category, that is, for two simplicial maps $\varphi$ and $\psi$ from $K$, one has $\SD(\varphi,\psi)\leq \scat(K)$. While this proposition (as introduced in \cite{BPV}) is taken account, Corollary~\ref{yeni2} gives a better lower bound for $\scat(K)$ when one takes more arbitrary simplicial maps from $K$ (due to Proposition~\ref{prop3.4}). 

\begin{theorem}\label{strongly}
	If $K$ is strongly collapsible if and only if $\SD(\varphi_1,\dots,\varphi_n)=0$ for any simplicial maps $\varphi_1,\dots,\varphi_n:K\rightarrow K'.$
\end{theorem}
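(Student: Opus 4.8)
The plan is to obtain both implications from Corollary~\ref{yeni2}, which gives $\SD(\varphi_1,\dots,\varphi_n)\le\scat(K)$ for arbitrary simplicial maps, together with the elementary fact that $\scat(K)=0$ holds precisely when $K$ is strongly collapsible. Indeed, $\scat(K)=0$ means that $K$ is itself a categorical subcomplex of $K$, i.e.\ that the inclusion $\id_K\colon K\hookrightarrow K$ lies in the same contiguity class as some constant map $c_{v_0}\colon K\to K$; and $\id_K\sim c_{v_0}$ is exactly the assertion that $K$ and $\{v_0\}$ have the same strong homotopy type, since one may take $\varphi\colon K\to\{v_0\}$ to be the unique simplicial map and $\psi\colon\{v_0\}\hookrightarrow K$ the inclusion, whence $\varphi\circ\psi=\id_{\{v_0\}}$ and $\psi\circ\varphi=c_{v_0}\sim\id_K$.

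For the forward direction, assume $K$ is strongly collapsible; then $\scat(K)=0$ by the preceding remark, and for any simplicial maps $\varphi_1,\dots,\varphi_n\colon K\to K'$ Corollary~\ref{yeni2} gives $0\le\SD(\varphi_1,\dots,\varphi_n)\le\scat(K)=0$, so $\SD(\varphi_1,\dots,\varphi_n)=0$. (Alternatively, without invoking $\scat$: post-composing a contiguity chain $\id_K\sim c_{v_0}$ with each $\varphi_i$ yields $\varphi_i=\varphi_i\circ\id_K\sim\varphi_i\circ c_{v_0}=c_{\varphi_i(v_0)}$, and Lemma~\ref{soru} places all these constant maps in one contiguity class, so $\varphi_1\sim\dots\sim\varphi_n$.)

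For the converse, suppose $\SD(\varphi_1,\dots,\varphi_n)=0$ for every target complex $K'$ and every tuple of simplicial maps. Specialising to $K'=K$, $\varphi_1=\id_K$ and $\varphi_2=\dots=\varphi_n=c_{v_0}$ for a fixed vertex $v_0\in K$, the hypothesis forces $\id_K,c_{v_0},\dots,c_{v_0}$ into a single contiguity class, in particular $\id_K\sim c_{v_0}$, which by the first paragraph says precisely that $K$ is strongly collapsible. The argument is short because Corollary~\ref{yeni2} does the real work; the only genuine decision is the choice of test maps in the converse, for which the identity together with a constant map is the obvious --- and essentially forced --- candidate, and the only caveat is the standing convention $n\ge 2$, without which $\SD(\varphi_1)=0$ is vacuous and the converse fails.
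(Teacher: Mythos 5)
Your proof is correct, but it takes a noticeably more economical route than the paper's. For the forward direction, you invoke Corollary~\ref{yeni2} directly: strong collapsibility gives $\scat(K)=0$, and then $\SD(\varphi_1,\dots,\varphi_n)\le\scat(K)=0$ finishes the job in one line. The paper instead reconstructs this from scratch, running the argument through Theorem~\ref{thm2.2}, the maps $\tilde{i}_j\colon K\to K^n$, and the auxiliary maps $\tilde{\varphi}_i\colon K^n\to(K')^n$, before reading off $\varphi_i\sim\varphi_{i+1}$; your alternative ``post-compose $\id_K\sim c_{v_0}$ with each $\varphi_i$ and use Lemma~\ref{soru}'' is exactly what this longer route amounts to once unwound. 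For the converse, the paper again specializes to $\varphi_j=\tilde{i}_j$ and cites Theorem~\ref{thm2.2} to land on $\scat(K)=0$, whereas you pick the more elementary witnesses $\varphi_1=\id_K$ and $\varphi_2=\dots=\varphi_n=c_{v_0}$ with target $K'=K$, which extracts $\id_K\sim c_{v_0}$ immediately. Both are valid specializations; yours avoids the detour through $K^n$. Two small caveats that apply equally to the paper's argument and yours: the forward implication (and Corollary~\ref{yeni2} itself, via Proposition~\ref{prop38}) silently assumes $K'$ is edge-path connected so that all constant maps into $K'$ are in one contiguity class, and the converse requires $n\ge 2$, which you are right to flag since $\SD(\varphi_1)$ is trivially~$0$.
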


\begin{proof}
	Assume that $K$ is strongly collapsible, then we have $\scat(K)=0$, more precisely, $id_K\sim c_{v_0}$. By Theorem \ref{thm2.2}, $\SD(\tilde{i}_1,\dots,\tilde{i}_n)=0$ where for each $j$, $\tilde{i}_j:K\rightarrow K^n$ be the simplicial map as defined in Theorem \ref{thm2.2}. Since $id_k\sim c_{v_0}$, we obtain $\tilde{i}_j\sim \tilde{c}_{v_0}$ for all $j=1,\dots,n$ where $\tilde{c}_{v_0}:=\Delta_K\circ c_{v_0}$ and $\Delta_K$ is the diagonal map of K. Consider the following composition of maps
	
	\begin{displaymath}
		\xymatrix{
			K \ar@/^/[r]^{\tilde{c}_{v_0}}
			\ar@/_/[r]_{\tilde{i}_k} &
			K^n \ar[r]^{\tilde{\varphi_i}} & (K')^n
		}
	\end{displaymath}

\begin{displaymath}
	\xymatrix{
		K \ar@/^/[r]^{\tilde{c}_{v_0}}
		\ar@/_/[r]_{\tilde{i}_k} &
		K^n  \ar[r]^{\tilde{\varphi_j}} & (K')^n
	}
\end{displaymath}

where $\tilde{\varphi_i}:K^n\rightarrow (K')^n$ defined by $(\sigma_1,\dots,\sigma_n)\mapsto (\varphi_i(\sigma_1),\dots,\varphi_i(\sigma_n))$ for all $i=1,\dots,n$. Then for all $i,k$, we have $\tilde{\varphi}_i\circ \tilde{c}_{v_0} \sim \tilde{\varphi}_i \circ \tilde{i}_k$ and for all $j,k$, we have $\tilde{\varphi}_j\circ \tilde{c}_{v_0} \sim \tilde{\varphi}_j \circ \tilde{i}_k$. Since $K'$ is edge-path connected, all constant maps are in the same contiguity class. Therefore $\tilde{\varphi}_i \sim \tilde{\varphi}_{i+1}$ for all $i=1,\dots,n-1$. By the definition of $\tilde{\varphi_i}$, we conclude that $\varphi_i\sim \varphi_{i+1}$ for all $i=1,\dots,n-1$. Hence $\SD(\varphi_1,\dots,\varphi_n)=0$.

On the other hand, suppose that $\SD(\varphi_1,\dots,\varphi_n)=0$. If we choose $\varphi_j=\tilde{i}_j$ for each $j=1,\dots,n$ we conclude that $\scat(K)=0$. This is the same as $K$ to be strongly collapsible.
\end{proof}

\begin{example} Consider the abstract simplicial complex $K$ as given in Figure~\ref{f1}. Consider any arbitrary simplicial maps 

\[
\varphi_1, \ldots, \varphi_n:K\rightarrow K.
\]

If one cover $K$ by the subcomplexes which do not contain any loops, then it is needed at least 3 subcomplexes. This is due to the fact that in order not to draw a loop, the number of edges must be 1 less then the number of vertices. 

Also, one can cover $K$ by two subcomplexes which has two loops in total. More precisely, if one can cover $K$ by the subcomplexes $U_0$ and $U_1$, then in the best case, either $U_0$ and $U_1$ both have one loop or $U_0$ has no loop while $U_1$ has two loops. Explicity writing, if $U_0$ and $U_1$ both have one loop, then they both contains 5 vertices and 5 edges. If $U_0$ has no loop while $U_1$ has 2 loops, then $U_0$ has 5 vertices and 4 edges while $U_1$ has 5 vertices and 6 edges. Here, $U_1$ has indeed 2 loops due to the fact that in order to draw the last edge, we need to draw it between non-adjacent vertices.  

If we consider the subcomplexes $U_0$ and $U_1$ as described in the preceding paragraph, then the least number of strongly collapsible subcomplexes of both $U_0$ and $U_1$ is 3. Then from the strongly collapsibility and by Theorem~\ref{strongly}, we get $\SD(\varphi_1, \ldots, \varphi_n)\leq 2$. 

On the other hand, if one has no loops in the subcomplexes as described in the first paragraph, then from the strongly collapsibility and by Theorem~\ref{strongly}, we get $\SD(\varphi_1, \ldots, \varphi_n)\leq 2$. 


\end{example}

\begin{proposition}
	If $K'$ is strongly collapsible then $\SD(\varphi_1,\dots,\varphi_n)=0$ for any simplicial maps $\varphi_1,\dots,\varphi_n:K\rightarrow K'.$
\end{proposition}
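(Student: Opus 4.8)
The plan is to reduce the claim to the earlier characterisation that $\SD(\varphi_1,\dots,\varphi_n)=0$ exactly when $\varphi_i\sim\varphi_{i+1}$ for every $i\in\{1,\dots,n-1\}$, and then to show that each $\varphi_i$ lies in the contiguity class of one and the same constant map.

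First I would unpack the hypothesis. Saying that $K'$ is strongly collapsible means $K'\sim\{v_0\}$ for some vertex $v_0\in K'$, so there are simplicial maps $p:K'\to\{v_0\}$ and $q:\{v_0\}\to K'$ with $q\circ p\sim\id_{K'}$. The composite $q\circ p$ is precisely the constant simplicial map $c_{v_0}:K'\to K'$, so the hypothesis gives $\id_{K'}\sim c_{v_0}$ as self-maps of $K'$.

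Next I would invoke the standard fact (used repeatedly above, e.g. in Propositions~\ref{prop215}--\ref{prop217}) that composition with a fixed simplicial map sends a contiguity class into a contiguity class. Post-composing $\id_{K'}\sim c_{v_0}$ with $\varphi_i$ on the right yields, for each $i$,
\[
\varphi_i=\id_{K'}\circ\varphi_i\ \sim\ c_{v_0}\circ\varphi_i,
\]
and $c_{v_0}\circ\varphi_i$ is simply the constant map $K\to K'$ with value $v_0$, which is the same for every $i$. Hence, by transitivity of "being in the same contiguity class", $\varphi_i\sim\varphi_{i+1}$ for all $i$; indeed all the $\varphi_i$ lie in a single contiguity class on all of $K$, so the trivial cover $\{K\}$ already witnesses $\SD(\varphi_1,\dots,\varphi_n)=0$. (Equivalently one may cite Theorem~\ref{strongly} after noting that a strongly collapsible $K'$ forces $\scat(K')=0$, but the direct argument is shorter.)

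There is essentially no obstacle here; the only points requiring care are the elementary observation that a strong homotopy equivalence with a point produces $\id_{K'}\sim c_{v_0}$, and the well-known compatibility of simplicial composition with contiguity classes, both of which are immediate from the definitions recalled in the Preliminaries.
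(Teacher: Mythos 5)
Your proposal is correct and follows essentially the same route as the paper's own proof: both use strong collapsibility to obtain $\id_{K'}\sim c_{v_0}$, pre-compose this contiguity relation with each $\varphi_i$ to get $\varphi_i\sim c_{v_0}\circ\varphi_i$, and conclude all the $\varphi_i$ lie in one contiguity class so that the trivial cover witnesses $\SD(\varphi_1,\dots,\varphi_n)=0$. Your observation that $c_{v_0}\circ\varphi_i$ is literally the \emph{same} constant map for every $i$ is a small simplification over the paper, which instead appeals to the fact that all constant maps are in the same contiguity class; otherwise the two arguments are identical.
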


\begin{proof}
	Assume that $K'$ is strongly collapsible, then we have $id_{K'}\sim c_{v_0}$ where $id_{K'}:K'\rightarrow K'$ is the identity map and $c_{v_0}:K'\rightarrow K'$ is the constant map with $v_0$ being a vertex in $K'$. Consider the following compositions
		\begin{displaymath}
		\xymatrix{
		K \ar[r]^{\varphi_i} & K' \ar@/^/[r]^{id_{K'}} \ar@/_/[r]_{c_{v_0}} & K'
		}
	\end{displaymath}
	
	\begin{displaymath}
		\xymatrix{
			K \ar[r]^{\varphi_j} & K' \ar@/^/[r]^{id_{K'}} \ar@/_/[r]_{c_{v_0}} & K'
		}
	\end{displaymath}
	where $i,j=1,\dots,n.$ Then for all $i,j$, we have $id_{K'}\circ\varphi_i\sim c_{v_0}\circ\varphi_i$ and  $id_{K'}\circ\varphi_j\sim c_{v_0}\circ\varphi_j$. Since all constant maps are in the same contiguity class, we come to conclusion that $\varphi_i\sim\varphi_j$ for all $i,j=1,\dots,n.$ Therefore $\SD(\varphi_1,\dots,\varphi_n)=0$.
	
\end{proof}

\begin{example} 	Consider the abstract simplicial complex $K=\{\{0\},\{1\},\{2\},\{0,1\},\{0,2\},\{1,2\}\}$ as given in Figure~\ref{f2} which is not strongly collapsible. 

\begin{center}
\begin{tikzpicture}[scale=0.5]\label{f2}
    \coordinate[label=below:$0$] (0) at (0,0);
    \coordinate[label=above:$1$] (1) at (2,3.4);
    \coordinate[label=below:$2$] (2) at (4,0);
    
    \draw (0) -- (1);
    \draw (0) -- (2);
    \draw (2) -- (1);
    
    \foreach \i in {0,1,2} {
        \filldraw[black] (\i) circle (3pt);
    }
\end{tikzpicture}
\end{center}
\[
\textit{Figure 3.2}
\]

Consider the identity map $\id:K\rightarrow K$, the constant map $c_0$ on $\{0\}$ and the simplicial map 
\[
\varphi: K\rightarrow K \hspace{0.05in} \text{by }\hspace{0.05in} \varphi(\{0\})=\{1\}, \hspace{0.03in}  \varphi(\{1\})=\{2\}, \hspace{0.03in} \varphi(\{2\})=\{0\}.
\] 

Let us take $U_0=\{\{0\},\{1\},\{2\},\{0,1\},\{0,2\}\}$ and $U_1=\{\{1\},\{2\},\{1,2\}\}$. On $U_0$, $c_0\sim\varphi$ since that $c_0 \sim_c c_1 \sim_c \varphi$ where $c_1:K\rightarrow K$ is the constant map on $\{1\}$; and on $U_1$, $c_0\sim\varphi$ since that they are also contiguous. Moreover, since that $U_0$ and $U_1$ are strongly collapsible complexes, as mentioned in \cite{BPV}, $\id|_{U_0}\sim {c_0}|_{U_0}$ and $\id|_{U_1}\sim {c_0}|_{U_1}$.

On the other hand, $K$ is not strongly collapsible, so we conclude that $\SD(\id,c_0,\varphi)=1$.

\end{example}

Next, we will focus on the relation between the higher contiguity distance of simplicial maps and the higher homotopic distance of their geometric realisations. 

\begin{lemma}\cite{S} \label{spanierlem} If contiguous simplicial maps agree on a subcomplex $L$, then they determine continuous maps which are homotopic relative to $|L|$.

\end{lemma}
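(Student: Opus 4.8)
The plan is to construct an explicit ``straight-line'' homotopy between the geometric realisations, performed simplex by simplex in the target. Write the contiguous maps as $\varphi,\psi:K\rightarrow K'$, agreeing on the subcomplex $L\subseteq K$, and let $|\varphi|,|\psi|:|K|\rightarrow|K'|$ be their geometric realisations. First I would recall the pointwise description of a realisation: a point $x\in|K|$ has barycentric coordinates $x=\sum_i t_i v_i$ supported on a unique simplex $\sigma=\{v_0,\dots,v_k\}$ of $K$, and then $|\varphi|(x)=\sum_i t_i\,\varphi(v_i)$ and $|\psi|(x)=\sum_i t_i\,\psi(v_i)$. Since $\varphi\sim_c\psi$, the set $\varphi(\sigma)\cup\psi(\sigma)$ spans a simplex $\tau_\sigma$ of $K'$, and both $|\varphi|(x)$ and $|\psi|(x)$ lie in the closed simplex $|\overline{\tau_\sigma}|$.

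Next I would define $H:|K|\times I\rightarrow|K'|$ by $H(x,s)=(1-s)\,|\varphi|(x)+s\,|\psi|(x)$, where the affine combination is taken inside $|\overline{\tau_\sigma}|$. Because a closed simplex is convex, the segment joining $|\varphi|(x)$ to $|\psi|(x)$ lies entirely in $|\overline{\tau_\sigma}|\subseteq|K'|$, so $H$ is well defined. On each product $|\overline{\sigma}|\times I$ the map $H$ is a composite of continuous affine maps, hence continuous; on an overlap $|\overline{\sigma}|\cap|\overline{\sigma'}|=|\overline{\sigma\cap\sigma'}|$ the two formulas agree, since for $x$ supported on $\sigma\cap\sigma'$ the points $|\varphi|(x)$ and $|\psi|(x)$ already lie in $|\overline{\tau_{\sigma\cap\sigma'}}|$, a common face of $\tau_\sigma$ and $\tau_{\sigma'}$, so the segment between them is computed identically either way. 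As $|K|\times I$ carries the weak topology with respect to the subspaces $|\overline{\sigma}|\times I$, the pasting lemma then gives continuity of $H$ on all of $|K|\times I$. By construction $H(x,0)=|\varphi|(x)$ and $H(x,1)=|\psi|(x)$.

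Finally I would verify the relative statement. If $x\in|L|$, then $x$ is supported on a simplex of $L$, on whose vertices $\varphi$ and $\psi$ coincide by hypothesis; hence $|\varphi|(x)=|\psi|(x)$ and therefore $H(x,s)=|\varphi|(x)$ for every $s\in I$. Thus $H$ is stationary on $|L|$, which is precisely the assertion that $|\varphi|$ and $|\psi|$ are homotopic relative to $|L|$.

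The one genuinely delicate point is the continuity of $H$: one has to be certain that the simplexwise straight-line formulas glue to a globally continuous map. This is exactly what the analysis of the overlaps above secures — each point lies in the interior of a unique simplex, the formula extends continuously to the closure of that simplex, and the compatible values on the intersections together with the weak topology on $|K|\times I$ (using compactness of $I$) upgrade piecewise continuity to continuity. Everything else — the endpoint conditions and the rel-$|L|$ clause — is immediate from the definitions.
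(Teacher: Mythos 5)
The paper does not reproduce a proof of this lemma; it simply cites Spanier. Your argument is precisely the standard one found in Spanier (the straight-line homotopy, justified simplex-by-simplex by the convexity of the closed simplex spanned by $\varphi(\sigma)\cup\psi(\sigma)$, together with the weak topology on $|K|\times I$ to upgrade simplexwise continuity), so it is correct and coincides with the cited source's approach.
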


\begin{proposition}\label{abcde}
	For simplicial maps $\varphi_1,\varphi_2,...,\varphi_n:K\rightarrow L$, we have 
\[
\D(|\varphi_1|,...,|\varphi_n|)\leq \SD(\varphi_1,...,\varphi_n).
\]
\end{proposition}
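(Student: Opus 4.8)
The plan is to take an optimal simplicial cover witnessing $\SD(\varphi_1,\dots,\varphi_n)=k$ and push it forward to an open cover of $|K|$ witnessing $\D(|\varphi_1|,\dots,|\varphi_n|)\leq k$. So suppose $K_0,K_1,\dots,K_k$ are subcomplexes covering $K$ such that, on each $K_j$, the restrictions satisfy $\varphi_1|_{K_j}\sim\varphi_2|_{K_j}\sim\dots\sim\varphi_n|_{K_j}$. First I would recall that the geometric realisations $|K_j|$ form a closed cover of $|K|$ by subpolytopes; to get an \emph{open} cover (as required by the definition of $\D$), one thickens each $|K_j|$ slightly inside $|K|$, for instance by passing to a subdivision or by using open stars, obtaining open sets $U_j\supseteq|K_j|$ that deformation retract onto $|K_j|$. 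This is a standard manoeuvre and I would cite it rather than belabour it.

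Next, the core step: on each $K_j$ the maps $\varphi_i|_{K_j}$ lie in a common contiguity class, meaning there is a finite chain of contiguous simplicial maps $\varphi_i|_{K_j}=\psi^{i}_0\sim_c\psi^{i}_1\sim_c\dots\sim_c\psi^{i}_{m_i}=\varphi_{i+1}|_{K_j}$. By Lemma~\ref{spanierlem}, each single contiguity $\psi\sim_c\psi'$ yields a homotopy between the induced continuous maps $|\psi|$ and $|\psi'|$; concatenating these homotopies along the chain produces a homotopy $|\varphi_i|_{|K_j|}\simeq|\varphi_{i+1}|_{|K_j|}$ for each $i=1,\dots,n-1$, hence $|\varphi_1|_{|K_j|}\simeq\dots\simeq|\varphi_n|_{|K_j|}$. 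Composing with the deformation retraction $U_j\to|K_j|$ (and using homotopy invariance) transfers these homotopies to the open set $U_j$, so that $|\varphi_1|_{U_j}\simeq\dots\simeq|\varphi_n|_{U_j}$. Since the $U_j$ form an open cover of $|K|$ of cardinality $k+1$, the definition of higher homotopic distance gives $\D(|\varphi_1|,\dots,|\varphi_n|)\leq k=\SD(\varphi_1,\dots,\varphi_n)$.

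I expect the main obstacle to be the open-thickening step, not the homotopy-building step: Lemma~\ref{spanierlem} does exactly the work of turning contiguity into homotopy, so that part is essentially bookkeeping (concatenating finitely many homotopies). The delicate point is that $\D$ is defined via \emph{open} covers of a topological space whereas $\SD$ uses \emph{subcomplex} covers, and geometric realisations of subcomplexes are closed, not open. One must therefore argue carefully that enlarging $|K_j|$ to an open $U_j$ can be done simultaneously for all $j$ while preserving the homotopy $|\varphi_1|_{U_j}\simeq\dots\simeq|\varphi_n|_{U_j}$; the cleanest route is to use that $|K|$ is an ANR (or simply a CW/polytope) so each closed subpolytope has arbitrarily small open neighbourhoods deformation retracting onto it, and to invoke the homotopy extension property or direct retraction to carry the homotopies along. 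Once that is in place, the inequality follows immediately.
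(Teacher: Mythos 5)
Your proof follows essentially the same route as the paper: take an optimal subcomplex cover witnessing $\SD(\varphi_1,\dots,\varphi_n)=k$, and use Lemma~\ref{spanierlem} to turn the contiguity chains on each $K_j$ into homotopies between the realisations on $|K_j|$. The one place where you go beyond the paper is the open-versus-closed issue. The paper's proof stops at the observation that $|K_0|,\dots,|K_k|$ form a \emph{closed} cover of $|K|$, without reconciling this with the definition of $\D$ via open covers; it is implicitly relying on the standard fact (present in Macias-Virgos--Mosquera-Lois for ANRs, or equivalently for polyhedra) that closed covers, or arbitrary covers, compute the same homotopic distance. You instead make this explicit by thickening each $|K_j|$ to an open $U_j$ deformation retracting onto it and transporting the homotopies. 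Both are valid; your version spells out a step that the paper leaves to the reader, and is arguably the more self-contained argument, at the cost of invoking the ANR/open-star machinery. Either way the core content -- Lemma~\ref{spanierlem} plus concatenation of homotopies along contiguity chains -- is identical, so the proposal is correct and matches the paper's strategy.
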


\begin{proof}
	Let $ \SD(\varphi_1,...,\varphi_n)=k$. Then there are subcomplexes $K_0,K_1,...,K_k$ covering $K$ such that $\varphi_i|_{K_j}\sim \varphi_{i-1}|_{K_j}$ for all $i=2,...,n$ and $j=0,...,k$. By Lemma \ref{spanierlem}, we have $|\varphi_i|\big| _{|K_j|}\simeq |\varphi_{i-1}|\big| _{|K_j|} $  for all $i=2,...,n$ and $j=0,...,k$.
	Moreover the union of the closed subsets $|K_0|,|K_1|,...,|K_n|$ cover $|K|$.
\end{proof}

The following is an example for the strict case of the inequality of Proposition~\ref{abcde}.

\begin{example}
	Consider the simplicial complex $K$ given in Figure~\ref{pic}. Let $\tilde{i}_j:K\rightarrow K^n$ be the simplicial maps as defined in Theorem \ref{thm2.2}. By Example 3.3 in \cite{FMV} $\scat(K)=1$ and by Theorem \ref{thm2.2} we have $\SD(\tilde{i}_1,...,\tilde{i}_n)=1$. Since $K$ is contractible, $0=\cat(|K|)=\D(|\tilde{i}_1|,...,|\tilde{i}_n|)$. Hence we conclude that $\D(|\tilde{i}_1|,...,|\tilde{i}_n|) < \SD(\tilde{i}_1,...,\tilde{i}_n)$.

\[
\begin{tikzpicture}[scale=0.5] \label{pic}
 
\draw [fill=gray, gray] (0,0) -- (0,0) -- (4,6.8) -- (8,0);
\draw (0,0) -- (3,2.8);
\draw (0,0) -- (4,1);
\draw (4,6.8) -- (3,2.8);
\draw (4,6.8) -- (5,2.8);
\draw (8,0) -- (4,1);
\draw (8,0) -- (5,2.8);
\draw (3,2.8) -- (4,1);
\draw (3,2.8) -- (5,2.8);
\draw (5,2.8) -- (4,1);
\draw (0,0) -- (4,6.8);
\draw (0,0) -- (8,0);
\draw (8,0) -- (4,6.8);
\end{tikzpicture}
\]
\[
\textit{Figure 3.3}
\]

\end{example}

\section{Barycentric Subdivision}

For a given abstract simplicial complex $K$,  recall that the (first) barycentric subdivision of $K$, denoted by $\sd{K}$, is defined as follows.

The vertex set of $\sd{K}$ is the set of simplices of $K$ and the set $\{\sigma_1,\ldots,\sigma_q\}$ determines a simplex in $\sd{K}$ if $\sigma_1,\ldots,\sigma_q$ are simplices of $K$ satisfying $\sigma_1 \subset \ldots \subset \sigma_q$. 

\begin{definition}\cite{FMV} For a simplicial map $\varphi:K\rightarrow K'$, the induced map on barycentric subdivisions $\sd\varphi:\sd(K)\rightarrow \sd(K')$ is defined as 
\[
(\sd\varphi)(\{\sigma_1,\ldots,\sigma_q\})=\{\varphi(\sigma_1),\ldots,\varphi(\sigma_q)\}.
\]
\end{definition}

Notice that $\sd\varphi$ is a simplicial map.

\begin{remark}\cite{FMV} $\sd(\varphi\circ \psi)=\sd\varphi\circ \sd\psi$ and $\sd (\id)=\id$.
\end{remark}

\begin{proposition} \cite{FMV} \label{prop}
	For simplicial maps $\varphi, \psi: K\rightarrow L$, if $\varphi$ and $\psi$ are in the same contiguity class then $\sd(\varphi)$ and $\sd(\psi)$ are in the same contiguity class.
\end{proposition}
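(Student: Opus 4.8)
The plan is to reduce the statement to the single-step contiguity case and then invoke Proposition~\ref{prop} (the corresponding statement for $\sim$, already available in the excerpt). First I would recall that $\varphi$ and $\psi$ being in the same contiguity class means there is a finite chain of simplicial maps $\varphi=\varphi_0\sim_c\varphi_1\sim_c\cdots\sim_c\varphi_m=\psi$, where each consecutive pair is \emph{contiguous}. Applying $\sd$ to this chain produces a chain $\sd(\varphi)=\sd(\varphi_0),\sd(\varphi_1),\ldots,\sd(\varphi_m)=\sd(\psi)$, so it suffices to show that contiguity is preserved by $\sd$: if $\alpha\sim_c\beta:K\to L$, then $\sd(\alpha)\sim_c\sd(\beta):\sd(K)\to\sd(L)$. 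Chaining these single-step implications together then gives $\sd(\varphi)\sim\sd(\psi)$.

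For the single-step claim, take a simplex of $\sd(K)$, i.e.\ a flag $\{\sigma_1,\ldots,\sigma_q\}$ with $\sigma_1\subset\cdots\subset\sigma_q$ a chain of simplices of $K$. By definition of the induced map, $(\sd\alpha)(\{\sigma_1,\ldots,\sigma_q\})=\{\alpha(\sigma_1),\ldots,\alpha(\sigma_q)\}$ and similarly for $\beta$. I must verify that
\[
\{\alpha(\sigma_1),\ldots,\alpha(\sigma_q),\beta(\sigma_1),\ldots,\beta(\sigma_q)\}
\]
is a simplex of $\sd(L)$, i.e.\ that this finite collection of simplices of $L$ forms a chain under inclusion. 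The key input is the vertex-level description of contiguity combined with the largest simplex $\sigma_q$: since $\alpha\sim_c\beta$ and $\sigma_q$ is a simplex of $K$, the set $\tau:=\alpha(\sigma_q)\cup\beta(\sigma_q)$ is a simplex of $L$. Moreover, for every $i$ we have $\alpha(\sigma_i)\subseteq\alpha(\sigma_q)\subseteq\tau$ and $\beta(\sigma_i)\subseteq\beta(\sigma_q)\subseteq\tau$, so all the listed simplices $\alpha(\sigma_1),\ldots,\beta(\sigma_q)$ are faces of the single simplex $\tau$.

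The step I expect to be the main obstacle is concluding that a collection of faces of a common simplex, together with the inclusion relations $\alpha(\sigma_1)\subseteq\cdots\subseteq\alpha(\sigma_q)$ and $\beta(\sigma_1)\subseteq\cdots\subseteq\beta(\sigma_q)$, actually assembles into a \emph{single flag} — a totally ordered chain — in $\sd(L)$. This is where some care is needed: being faces of a common simplex does not by itself make the whole set a chain. The cleanest resolution is to observe that a set of vertices of $\sd(L)$ spans a simplex precisely when it is a \emph{totally ordered} subset of the face poset of $L$, and that $\{\alpha(\sigma_1),\ldots,\alpha(\sigma_q),\beta(\sigma_1),\ldots,\beta(\sigma_q)\}$ need not be linearly ordered; however, the \emph{union} $\alpha(\sigma_i)\cup\beta(\sigma_i)$ is, and it is a simplex of $L$ for each $i$ (apply contiguity to $\sigma_i$), giving a genuine chain $\alpha(\sigma_1)\cup\beta(\sigma_1)\subseteq\cdots\subseteq\alpha(\sigma_q)\cup\beta(\sigma_q)$. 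Interleaving this chain with the $\alpha$-chain and the $\beta$-chain shows every vertex appearing in $(\sd\alpha)$ or $(\sd\beta)$ of the flag lies in a common flag of $\sd(L)$, which is exactly what contiguity of $\sd(\alpha)$ and $\sd(\beta)$ requires. Once this combinatorial bookkeeping is done, transitivity of $\sim$ along the chain finishes the proof, so in fact the statement is a direct corollary of Proposition~\ref{prop}.
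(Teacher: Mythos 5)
Your reduction to a single contiguity step is the natural first move, but the lemma you then need --- that $\alpha\sim_c\beta$ implies $\sd(\alpha)\sim_c\sd(\beta)$ --- is false, and the worry you raise near the end is precisely the fatal obstruction, not a bookkeeping nuisance. Take $K$ the $1$-simplex on $\{a,b\}$, $L$ the full $2$-simplex on $\{x,y,z\}$, $\alpha(a)=x$, $\alpha(b)=y$, $\beta(a)=y$, $\beta(b)=z$; then $\alpha\sim_c\beta$. For the flag $\{a\}\subset\{a,b\}$ of $\sd(K)$, the union of images is $\bigl\{\{x\},\{x,y\},\{y\},\{y,z\}\bigr\}$, which contains the incomparable pairs $\{x\},\{y\}$ and $\{x,y\},\{y,z\}$, so it is not a chain and hence not a simplex of $\sd(L)$. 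Thus $\sd(\alpha)\not\sim_c\sd(\beta)$. Your proposed repair does not close the gap: the claim that "every vertex \ldots\ lies in a common flag of $\sd(L)$" is exactly the assertion that the whole union is a chain, which the example refutes; contiguity requires the \emph{entire} set $(\sd\alpha)(\tau)\cup(\sd\beta)(\tau)$ to be one simplex of $\sd(L)$, not merely that each element sit in some flag, and inserting $\alpha(\sigma_i)$ or $\beta(\sigma_i)$ into the chain of unions destroys linear order (e.g.\ $\alpha(\sigma_1)\cup\beta(\sigma_1)$ and $\alpha(\sigma_2)$ need not be comparable). Finally, the last sentence is circular: Proposition~\ref{prop} \emph{is} the statement under proof, so it cannot be invoked.

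What is true, and suffices, is the weaker conclusion $\sd(\alpha)\sim\sd(\beta)$ (same contiguity class) after one contiguity step, but this requires a genuine finite sequence of intermediate simplicial maps. One clean route: for each $j\ge0$ define $\gamma_j:\sd(K)\to\sd(L)$ on vertices by $\gamma_j(\sigma)=\alpha(\sigma)$ if $\dim\sigma<j$ and $\gamma_j(\sigma)=\alpha(\sigma)\cup\beta(\sigma)$ otherwise. Each $\gamma_j$ is simplicial (in a flag the dimensions are strictly increasing, so the image is a chain), $\gamma_j=\sd(\alpha)$ once $j>\dim K$, and $\gamma_j\sim_c\gamma_{j+1}$ because a flag contains at most one simplex of dimension exactly $j$, and replacing $\alpha(\sigma)\cup\beta(\sigma)$ by $\alpha(\sigma)$ at that one spot keeps the union in a single chain. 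Hence $\sd(\alpha)\sim\gamma_0$ where $\gamma_0(\sigma)=\alpha(\sigma)\cup\beta(\sigma)$; symmetrically $\sd(\beta)\sim\gamma_0$, so $\sd(\alpha)\sim\sd(\beta)$, and chaining over $\varphi=\varphi_0\sim_c\cdots\sim_c\varphi_m=\psi$ gives the proposition. (The paper itself cites this result from \cite{FMV} without reproducing the argument, so the gap above is a mathematical one, not a divergence from an in-paper proof.)
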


\begin{theorem}\label{sdt}
	For simplicial maps $\varphi_1,\dots,\varphi_n:K\rightarrow K'$, $\SD(\sd(\varphi_1),\dots,\sd(\varphi_n))\leq \SD(\varphi_1,\dots,\varphi_n)$.
\end{theorem}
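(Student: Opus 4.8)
The plan is to mimic the proof of Proposition~\ref{abcde}, replacing the use of Lemma~\ref{spanierlem} (geometric realisation preserves homotopy) with Proposition~\ref{prop} (barycentric subdivision preserves contiguity class). First I would set $\SD(\varphi_1,\dots,\varphi_n)=k$, so there is a covering of $K$ by subcomplexes $K_0,K_1,\dots,K_k$ with $\varphi_1|_{K_j}\sim\varphi_2|_{K_j}\sim\dots\sim\varphi_n|_{K_j}$ for each $j=0,\dots,k$. I would then pass to barycentric subdivisions and produce the candidate covering $\sd(K_0),\sd(K_1),\dots,\sd(K_k)$ of $\sd(K)$.

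Two things need to be checked for this candidate covering to work. First, that $\{\sd(K_j)\}_{j=0}^{k}$ genuinely covers $\sd(K)$: this is because every simplex $\{\sigma_1\subset\dots\subset\sigma_q\}$ of $\sd(K)$ has top face $\sigma_q$ lying in some $K_j$, and then $\sigma_1,\dots,\sigma_q$ are all faces of $\sigma_q$, hence simplices of $K_j$, so the chain lies in $\sd(K_j)$; also $\sd(K_j)$ is a subcomplex of $\sd(K)$. Second, that the restrictions agree: I would observe that $\sd(\varphi_i|_{K_j})=\sd(\varphi_i)|_{\sd(K_j)}$, which follows directly from the definition of $\sd$ on maps (it is computed vertex-by-vertex, i.e. simplex-by-simplex of $K$, so restricting the domain commutes with applying $\sd$). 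Then, since $\varphi_i|_{K_j}\sim\varphi_{i-1}|_{K_j}$ for $i=2,\dots,n$, Proposition~\ref{prop} gives $\sd(\varphi_i|_{K_j})\sim\sd(\varphi_{i-1}|_{K_j})$, i.e. $\sd(\varphi_i)|_{\sd(K_j)}\sim\sd(\varphi_{i-1})|_{\sd(K_j)}$, for each $j$. Chaining these yields $\sd(\varphi_1)|_{\sd(K_j)}\sim\dots\sim\sd(\varphi_n)|_{\sd(K_j)}$, so each $\sd(K_j)$ is a member of a valid covering witnessing $\SD(\sd(\varphi_1),\dots,\sd(\varphi_n))\leq k$.

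I expect the only mildly delicate point to be the identity $\sd(\varphi|_{K_j})=\sd(\varphi)|_{\sd(K_j)}$, which is essentially bookkeeping about how $\sd$ acts on vertices (simplices of $K_j$ are simplices of $K$, and $\sd\varphi$ sends the chain $\{\sigma_1,\dots,\sigma_q\}$ to $\{\varphi(\sigma_1),\dots,\varphi(\sigma_q)\}$ regardless of the ambient complex), so it really is routine. Everything else is a direct transcription of the argument in Proposition~\ref{abcde}, now in the purely combinatorial setting, so there is no genuine obstacle — the work is just verifying that the covering of $K$ subdivides to a covering of $\sd(K)$ of the same cardinality on which the desired contiguity relations persist.
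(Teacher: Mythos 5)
Your proposal is correct and follows essentially the same route as the paper: replace each $K_j$ by $\sd(K_j)$, invoke Proposition~\ref{prop} to transport the contiguity relations, and check that restriction commutes with $\sd$. The one thing you do that the paper leaves implicit is verifying that $\{\sd(K_j)\}$ actually covers $\sd(K)$ (via the observation that a chain $\sigma_1\subset\dots\subset\sigma_q$ lies entirely in whichever $K_j$ contains $\sigma_q$), which is a welcome bit of rigor but not a departure in method.
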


\begin{proof}
	Assume that $\SD(\varphi_1,\dots,\varphi_n)=k$. There exist subcomplexes $K_0,\dots,K_k$ covering $K$ such that $\varphi_1|_{K_j}\sim \varphi_2|_{K_j} \sim \dots \sim \varphi_n|_{K_j}$ for all $j=0,\dots,k$.
	Let consider the cover $\{\sd(K_0),\sd(K_1),\dots,\sd(K_k)\}$ of $\sd(K)$. By Proposition~\ref{prop}, if $\varphi_1|_{K_j}\sim \varphi_2|_{K_j} \sim \dots \sim \varphi_n|_{K_j}$, then $\sd(\varphi_1|_{K_j})\sim \sd(\varphi_2|_{K_j}) \sim \dots \sim \sd(\varphi_n|_{K_j}).$
	
	If we show that $\sd(\varphi_i|_{K_j})=\sd(\varphi_i|_{\sd(K_j)})$ for all $i=1,\dots,n$ and $j=0,\dots,k$, then we are done. For $\{\sigma_1,\dots,\sigma_q\} \in \sd(K_j)$, we have
	\begin{eqnarray*}
	 \sd(\varphi_i|_{K_j})(\{\sigma_1,\dots,\sigma_q\})&=&\{\varphi_i|_{K_j}(\sigma_1),\dots,\varphi_i|_{K_j}(\sigma_q)\} \\
	&=&\{\varphi_i(\sigma_1),\dots,\varphi_i(\sigma_q)\} \\
	&=&\{\varphi_i|_{\sd(K_j)}(\sigma_1),\dots,\varphi_i|_{\sd(K_j)}(\sigma_q)\} \\
	&=&\sd(\varphi_i|_{\sd(K_j)}(\{\sigma_1,\dots,\sigma_q\})).
\end{eqnarray*}

where the first and the last equalities follow from the definition of the induced map on the barycentric subdivisions whereas the third equality comes from the fact that $\sigma_s$ is a simplex in $\sd(K_j)$ for $s=1,\ldots,q$. 

Consequently, since the following holds $$\sd(\varphi_1|_{K_j}) \sim \dots \sim \sd(\varphi_n|_{K_j}),$$ we obtain $$\sd(\varphi_1|_{\sd(K_j)})\sim \dots \sim \sd(\varphi_n|_{\sd(K_j)}).$$ for all $j=0,\dots,k.$

\end{proof} 

The following is an example for the strict case of Theorem~\ref{sdt}.

\begin{example} Consider the abstract simplicial complex $K$ as given in Figure~\ref{f1}. In Example 3.6 in \cite{FMMV2}, it is showed that $\scat(K)=2$ while $\scat(\sd{K})=1$. For a fixed vertex $v_0$ in $K$, consider the simplicial inclusion maps $i_1,i_2,i_3:K\rightarrow K^3$ given by $i_1(v)=(v,v_0,v_0)$, $i_2(v)=(v_0,v,v_0)$ and $i_1(v)=(v_0,v_0,v)$. Hence, by Theorem~\ref{thm2.2}, we have 
\[
1=\SD(\sd{i_1},\sd{i_2},\sd{i_3})< \SD(i_1,i_2,i_3)=2.
\]
\end{example}

\section{Acknowledgement}
The authors would like to acknowledge that this paper is submitted in partial fulfilment of the requirements for PhD degree at Bursa Technical University.\\

\end{document}